\theoremstyle{plain}
 \newtheorem{thm}{Theorem}[section]
 \newtheorem{lem}[thm]{Lemma}
 \newtheorem{cor}[thm]{Corollary}
 \newtheorem{prop}[thm]{Proposition}
 \theoremstyle{definition}
 \newtheorem{df}[thm]{Definition}
 \newtheorem{ex}[thm]{Example}
 \newtheorem*{prob*}{Problem}
\newenvironment{newlist}
   {\begin{list}{}{\setlength{\labelsep}{0.25cm}
                   \setlength{\labelwidth}{0.65cm}
                   \setlength{\leftmargin}{0.9cm}}}
   {\end{list}}
\tikzset{%
 shaded/.style={draw, shape=circle, fill=black!35, inner sep=1.4pt},
 unshaded/.style={draw, shape=circle, fill=white, inner sep=1.4pt},
 quasi/.style={draw, shape=rectangle, rounded corners=3pt, fill=white, inner sep=2.5pt, minimum height=14.5pt},
 blob/.style={draw, shape=rectangle, rounded corners=12pt, thin, densely dotted},
 arrow/.style={->, thin, >=latex, shorten >=2.5pt, shorten <=2.5pt},
 order/.style={thin},
 curvy/.style={thin, looseness=1.2, bend angle=70},
 fatcurvy/.style={thin, looseness=1.7, bend angle=75},
 label/.style={shape=rectangle, inner sep=6pt},
 auto}
\newcommand{\lincol}{black}
\newcommand{\linth}{thick}
\newcommand{\po}[2][\pocol]{\filldraw[#1](#2) circle (2 pt);}
\newcommand{\li}[1]{\draw[\linth,\lincol] #1;}
\newcommand{\dotli}[1]{\draw[dotted,\linth,\lincol] #1;}
\newcommand{\smallpo}[2][\pocol]{\filldraw[#1](#2) circle (1.5 pt);}
\newcommand{\cat}[1]{\boldsymbol{\mathscr{#1}}}
\newcommand{\CL}{\cat L}
\font\bmi=cmmi8 scaled 1440
\newcommand{\powerset}{\raise.6ex\hbox{\bmi\char'175 }}
\newcommand{\mph}[2]{\CL^{\rm mp}(#1,#2)}
\newcommand{\D}[1]{\mathrm D(#1)}
\newcommand{\bbar}[1]{{\underline{\mathbf{#1}}}}
\newcommand{\twiddle}[1]{{\smash{\underset{\raise.375ex\hbox{$\smash\sim$}}
       {\mathbf{#1}}}\vphantom{\underline{\mathbf{#1}}}}}
\newcommand{\stwiddle}[1]{\smash{\underset{\smash{\raise.1ex\hbox{\small$\sim$}}}
                         {\mathbf{#1}}}\vphantom{#1}}
\newcommand{\twoB}{\bbar 2}
\newcommand{\T}{\mathscr{T}}
\newcommand{\C}{\mathbf C}
\newcommand{\Lalg}{\mathbf L}
\newcommand{\F}{\mathbf F}
\newcommand{\G}{\mathbf G}
\newcommand{\X}{\mathbf X}
\newcommand{\Y}{\mathbf Y}
\DeclareMathOperator{\gr}{gr}
\DeclareMathOperator{\Filt}{Filt}
\DeclareMathOperator{\Idl}{Idl}
\newcommand{\bigand}{\mathop{\bigwedge\kern -8.5truept \bigwedge}}
\newcommand{\Bigand}{\mathop{\bigwedge\kern -10truept \bigwedge}}
\newcommand{\littleand}{\mathbin{\wedge\kern -8truept \wedge}}
\newcommand{\bigor}{\mathop{\bigvee\kern -8.5truept \bigvee}}
\newcommand{\Bigor}{\mathop{\bigvee\kern -10truept \bigvee}}
\newcommand{\littleor}{\mathbin{\vee\kern -8truept \vee}}
\renewcommand{\le}{\leqslant}
\renewcommand{\leq}{\leqslant}
\renewcommand{\nleq}{\nleqslant}
\newcommand{\jty}{J^{\infty}}
\newcommand{\mty}{M^{\infty}}
\newcommand{\RTR}{R_{\triangleright}}
\newcommand{\RTL}{R_{\triangleleft}}
\begin{document}


\title[Canonical extensions of lattices]{Canonical extensions of lattices are more than
perfect}

\author[A.P.K. Craig]{Andrew P. K. Craig}
\address{Department of Mathematics and Applied Mathematics\\
University of Johannesburg\\PO Box 524, Auckland Park, 2006\\South~Africa}
\email{acraig@uj.ac.za}

\author[M.J. Gouveia]{Maria J. Gouveia}
\email{mjgouveia@fc.ul.pt}
\address{Faculdade de Ci\^encias da Universidade de Lisboa\\ P-1749-016 Lisboa\\ Portugal}

\author[M. Haviar]{Miroslav Haviar}
\address{Department of Mathematics\\Faculty of Natural Sciences, M. Bel University\\Tajovsk\'eho 40, 974~01 Bansk\'a Bystrica\\Slovakia}
\email{miroslav.haviar@umb.sk}

\dedicatory{In memory of Bjarni J\'onsson}

\subjclass{06B23, 06D50, 06B15}
\keywords{bounded lattice, canonical extension, perfect lattice, RS
frame, TiRS frame, TiRS graph, PTi lattice}

\begin{abstract}
In \cite{CGH15} we introduced TiRS graphs and TiRS frames to create
a new natural setting for duals of canonical extensions of lattices.
In this continuation of \cite{CGH15} we answer Problem 2 from there
by characterising the perfect lattices that are dual to TiRS frames
(and hence TiRS graphs). We introduce a new subclass of perfect
lattices called PTi lattices and show that the canonical extensions
of lattices are PTi lattices, and so are `more' than just perfect
lattices. We introduce morphisms of TiRS structures and put our
correspondence between TiRS graphs and TiRS frames from \cite{CGH15}
into a full categorical framework. We illustrate our correspondences
between classes of perfects lattices and classes of TiRS graphs by
examples.
\end{abstract}

\maketitle


\section{Introduction}\label{sec:intro}

An important aspect of  the study of 
lattice-based algebras in recent
decades has been the theory of canonical extensions.
This has its origins in the 
1951--52 papers of J\'{o}nsson and Tarski~\cite{JT51}. 
We refer to Gehrke and Vosmaer~\cite{GV11} for a survey of the theory of canonical
extensions for lattice-based algebras 
and, for further background,
to recent papers by
Gehrke~\cite{Ge18} and 
Goldblatt~\cite{Go18} and the references there,
in particular to
the first section of~\cite{Go18} 
called ``A~biography of canonical extension".

The canonical extensions of general (bounded) lattices were first
introduced by Gehrke and Harding~\cite{GH01} as the complete
lattices of Galois-closed sets associated with a polarity between
the filter lattice and the ideal lattice of the given lattice. 
(The same polarity was also used in the lattice representation of Hartonas and Dunn~\cite{HD97}.)
A~new
construction of the canonical extension of a general lattice was
provided in~\cite{CHP11} where it was based on a topological
representation of lattices by Plo\v{s}\v{c}ica~\cite{Pl95}. The
Plo\v{s}\v{c}ica representation presented a well-known
representation of general lattices due to Urquhart~\cite{U78} in the
spirit of the theory of natural dualities of Clark and Davey
\cite{CD98}. It used maximal partial maps into the two-element set
to represent elements of the first and second duals of a given
lattice.

An another construction of the canonical extensions of general
lattices was presented in \cite{CGH15} where Plo\v{s}\v{c}ica's
topological representation was used in tandem with Gehrke's
representation of perfect lattices via RS frames. (For the latter
we refer to papers \cite{DGP05} by 
Dunn, 
Gehrke and Palmigiano and
\cite{Ge06} by Gehrke.) In \cite{CGH15} we
also demonstrated a one-to-one correspondence between TiRS frames forming
a 
subclass of the RS frames
and TiRS graphs which we introduced as an abstraction of the duals
of general lattices in the Plo\v{s}\v{c}ica representation. This has
led to a new dual representation of the class of all finite lattices
via finite TiRS frames, or equivalently finite TiRS graphs, which
generalises the well-known Birkhoff dual representation between
finite distributive lattices and finite posets from the 1930s. (Here
we remark that every poset is a TiRS graph.) We use a~common concept
of \emph{TiRS structures} when we refer to both TiRS graphs and TiRS
frames without distinguishing between the two classes.

This paper has two goals:
\begin{enumerate}
\item To describe the additional properties that perfect lattices dual
to TiRS structures possess. This was listed as ``Problem 2''
in~\cite{CGH15}.
\item To describe the appropriate morphisms of TiRS structures and hence
to extend the one-to-one correspondence between the TiRS structures
from~\cite{CGH15} into a full categorical framework.
\end{enumerate}
We also show that the canonical extensions of lattices are PTi
lattices, which follows from their construction in~\cite{CGH15}
using Plo\v{s}\v{c}ica's and Gehrke's representations in tandem. We
present an example of a perfect but not PTi lattice together with
its dual TiRS graph and an example of a PTi lattice that is not the
canonical extension of any lattice together with its dual TiRS
frame.

\section{Preliminaries}


For a bounded lattice $\Lalg$, a \emph{completion} of $\Lalg$ is
defined to be a pair $(e,\C)$ where $\C$ is a complete lattice and
$e \colon \Lalg \hookrightarrow \C$ is an embedding. By a
\emph{filter element} (\emph{ideal element}) of  a completion
$(e,\C)$ of a bounded lattice~$\Lalg$ we mean an element of $\C$
which is a meet (join) of elements from $e(\Lalg)$. By $\mathbb{F}
(\C)$ and $\mathbb{I} (\C)$ are denoted the sets of all filter and
ideal elements of $\C$, respectively. (We remark that in the older
literature the filter (ideal) elements had been called closed (open)
elements.) A completion $(e,\C)$ of a bounded lattice $\Lalg$ is
called \emph{dense} if every element of~$\C$ can be expressed as
both a join of meets and a meet of joins of elements from
$e(\Lalg)$.  A completion $(e,\C)$ of $\Lalg$ is called
\emph{compact} if, for any sets $A \subseteq \mathbb{F} (\C)$ and
$B\subseteq \mathbb{I} (\C)$ with $\bigwedge A \le \bigvee B$, there
exist finite subsets $A' \subseteq A$ and $B' \subseteq B$ such that
$\bigwedge A' \le \bigvee B'$. (We remark that the sets $A, B$ in
the definition of compactness above can alternatively be taken as
arbitrary subsets of $L$.)

Gehrke and Harding \cite{GH01} defined abstractly \emph{the
canonical extension} $\Lalg^\delta $ of a general bounded lattice
$\Lalg$ as a dense and compact completion of $\Lalg$.  They proved
that every bounded lattice $\Lalg$ has a canonical extension and
that it is unique up to an  isomorphism that fixes the elements
of~$\Lalg$. Concretely, they constructed $\Lalg^\delta $ as the
complete lattice of Galois-stable sets of the polarity $R$ between
the filter lattice $\Filt (\Lalg)$ and the ideal lattice
$\Idl (\Lalg)$ of $\Lalg$ where the polarity is given by
$(F,I)\in R$ if $\ F \cap I \ne \emptyset$.

A filter-ideal pair $(F,I)$ will be called \emph{maximal} if $F$ and $I$ are maximal with respect to being disjoint from one another.
In our final section we shall use the following result from~\cite{GH01}:

\begin{lem}[{\cite[Lemma 3.4]{GH01}}]\label{GH-Lem3.4}
Let $(e,\C)$ be a canonical extension of $\Lalg$.
\begin{itemize}
\item[(1)] $x\in J^{\infty}(\C)$ if and only if $x=\bigwedge e[F]$ for some maximal pair $(F,I)$ of\, $\Lalg$;
\item[(2)] $x\in M^{\infty}(\C)$ if and only if $x=\bigvee e[I]$ for some maximal pair $(F,I)$ of $\Lalg$.
\end{itemize}
Further, each element of $\C$ is a join of completely join irreducibles and a meet
of completely meet irreducibles.
\end{lem}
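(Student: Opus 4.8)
The plan is to base everything on the two defining properties of a canonical extension recalled in the Preliminaries — density and compactness — and to set up a correspondence between maximal pairs and the completely join-/meet-irreducible elements. Two preliminary observations do most of the bookkeeping. First, density immediately gives that every $x\in J^{\infty}(\C)$ is a filter element: writing $x$ as the join of the filter elements below it, join-irreducibility collapses this to $x=\bigwedge e[F]$ for the filter $F=\{a\in\Lalg:x\le e(a)\}$ (and dually every element of $M^{\infty}(\C)$ is an ideal element). Second, compactness yields the ``translation lemma'': for a filter $F$ and an ideal $I$ of $\Lalg$, $\bigwedge e[F]\le\bigvee e[I]$ iff $F\cap I\ne\emptyset$ — the nontrivial direction just reduces the inequality to a finite one and uses that $F$ is $\wedge$-closed, $I$ is $\vee$-closed, and $e$ preserves finite meets and joins. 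In particular $\bigwedge e[F]\le e(a)\iff a\in F$, dually for ideals, and any disjoint pair $(F,I)$ satisfies $\bigwedge e[F]\not\le\bigvee e[I]$. I would then prove (1), with (2) completely dual.

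For the ``if'' half of (1): given a maximal pair $(F,I)$, set $p=\bigwedge e[F]$ and $q=\bigvee e[I]$, so $p\not\le q$ by the translation lemma. Since $p$ is a filter element, it is completely join-irreducible precisely when $\bigvee\{r\in\mathbb{F}(\C):r<p\}$ is strictly below $p$ (by density this join equals $\bigvee\{y\in\C:y<p\}$). So suppose instead $p=\bigvee\{r\in\mathbb{F}(\C):r<p\}$. Each such $r$ equals $\bigwedge e[F_r]$ for the filter $F_r=\{a:r\le e(a)\}$, and $r<p$ forces $F\subsetneq F_r$. Now if some $r\not\le q$, then $F_r\cap I=\emptyset$ (an element of the intersection would give $r\le e(d)\le q$), contradicting the maximality of $F$ inside the pair $(F,I)$. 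Hence every $r$ in the join satisfies $r\le q$, so $p\le q$ — a contradiction. Thus $p\in J^{\infty}(\C)$.

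For the ``only if'' half: take $x\in J^{\infty}(\C)$, so $x=\bigwedge e[F]$ as above, and let $x_{*}=\bigvee\{y\in\C:y<x\}<x$. The key move is to choose the companion ideal correctly. Using density applied to $x_{*}$ there is an ideal element above $x_{*}$ and not above $x$; a short Zorn argument then produces an ideal element $q$ maximal with the two properties $x_{*}\le q$ and $x\not\le q$ (the only thing to check for Zorn is that a directed join of ideal elements not above $x$ is again not above $x$, which is immediate from compactness). Put $I=\{a\in\Lalg:e(a)\le q\}$, an ideal with $\bigvee e[I]=q$; then $(F,I)$ is disjoint, and I claim it is a maximal pair. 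That $I$ is maximal among ideals disjoint from $F$: if one could add $b\notin I$, then $q\vee e(b)$ is an ideal element strictly above $q$ and still above $x_{*}$, so maximality of $q$ forces $x\le q\vee e(b)=\bigvee e[\langle I\cup\{b\}\rangle]$; the translation lemma then puts an element of $F$ into the ideal generated by $I\cup\{b\}$, hence into the enlarged ideal, a contradiction. That $F$ is maximal among filters disjoint from $I$ is where the condition $x_{*}\le q$ enters: if one could add $a\notin F$, the meet $\bigwedge e[\langle F\cup\{a\}\rangle]$ (over the filter generated by $F\cup\{a\}$) equals $x\wedge e(a)$, which is strictly below $x$ since $a\notin F$, hence $\le x_{*}\le q$; but the translation lemma says this meet is $\not\le q$ because $\langle F\cup\{a\}\rangle$ is disjoint from $I$ — a contradiction. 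So $(F,I)$ is a maximal pair with $x=\bigwedge e[F]$.

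For the final assertion I would prove the sharper statement: whenever $u\not\le v$ in $\C$ there is $j\in J^{\infty}(\C)$ with $j\le u$ and $j\not\le v$ (applying this with $v=\bigvee\{j\in J^{\infty}(\C):j\le u\}$ then forces that join to be $u$; the statement about $M^{\infty}(\C)$ is dual). By density pick a filter element $p\le u$ with $p\not\le v$, and then an ideal element $q\ge v$ with $p\not\le q$ (density again, since $v$ is the meet of the ideal elements above it). The pair $(\{a:p\le e(a)\},\{a:e(a)\le q\})$ is disjoint, so by Zorn's lemma it extends to a maximal pair $(F^{\sharp},I^{\sharp})$; by the ``if'' half $j:=\bigwedge e[F^{\sharp}]\in J^{\infty}(\C)$, clearly $j\le p\le u$, and $j\not\le\bigvee e[I^{\sharp}]$ by the translation lemma, whence $j\not\le v$ since $v\le q\le\bigvee e[I^{\sharp}]$. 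The step I expect to be the real obstacle is this choice of companion ideal in the ``only if'' direction: one cannot simply take a maximal ideal element not above $x$, because a completely join-irreducible element of a non-distributive canonical extension need not be join-prime, and the extra requirement $x_{*}\le q$ is exactly what blocks any enlargement of $F$.
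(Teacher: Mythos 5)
The paper does not prove this statement --- it is quoted from Gehrke and Harding \cite{GH01} (their Lemma~3.4) and used as a black box --- so there is no internal proof to compare yours against; I am judging your argument on its own. It is correct and complete. It is the ``abstract'' derivation from the two defining axioms of a canonical extension: your translation lemma $\bigwedge e[F]\le\bigvee e[I]\iff F\cap I\ne\emptyset$ is exactly compactness packaged for use, and the three places where such a proof usually breaks are all handled properly --- the Zorn step for the companion ideal element (closure of the admissible family under directed joins does follow from compactness, since a directed join of ideal elements is again an ideal element and a filter element below it must already lie below one term), the identification $\bigwedge e[\langle F\cup\{a\}\rangle]=x\wedge e(a)$ for the filter generated by $F\cup\{a\}$, and, crucially, the insistence that the maximal ideal element $q$ be chosen above $x_{*}$ rather than merely not above $x$. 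As you observe, that last constraint is the genuinely nontrivial idea: without it one cannot rule out a proper enlargement of $F$, because a completely join-irreducible element of a non-distributive canonical extension need not be join-prime. The proof in \cite{GH01} reaches the same correspondence by working inside the concrete realisation of $\Lalg^{\delta}$ as the Galois-closed sets of the polarity between $\Filt(\Lalg)$ and $\Idl(\Lalg)$, where the maximal pairs can be read off directly; your route is construction-independent and transfers to any dense and compact completion, at the cost of the Zorn bookkeeping. The only cosmetic caveat is that filters and ideals should be taken nonempty (so that disjointness forces both to be proper), which is the standing convention and does not affect any step.
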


Plo\v{s}\v{c}ica's dual~\cite[Section 1]{Pl95}
of a
bounded lattice $\Lalg$ is a graph with topology, $\D{\Lalg} =
(\mph{\Lalg}{\twoB},E,\T)$, where $\mph{\Lalg}{\twoB}$ is the set of
maximal partial homomorphisms from $\Lalg$ into $\twoB$. The graph
relation $E$ is defined by
$$(f,g) \in E\quad \text{if} \quad (\forall\, a \in dom\, f \cap dom\,
g) \ f(a)\leqslant g(a),
$$
or equivalently,
$$(f,g) \in E  \quad \text{if} \quad f^{-1}(1) \cap g^{-1}(0) = \emptyset.
$$
The topology $\T$ has as a subbasis of closed sets the set $\{\,
V_a, W_a \mid a \in L\,\}$, with $V_a=\{\,f \in \mph{\Lalg}{\twoB}
\mid f(a)=0 \,\}$ and $W_a=\{\,f \in \mph{\Lalg}{\twoB} \mid f(a)=1
\,\}$.

TiRS graphs were defined by the present authors in~\cite{CGH15} as
an abstraction of the graphs ${\rm D}^\flat(\Lalg)=
(\mph{\Lalg}{\twoB},E)$ obtained from Plo\v{s}\v{c}ica's duals of
bounded lattices $\Lalg$ by forgetting the topology.

For a graph $\X=(X,E)$ and $x \in X$, the sets  $\{\, y \in X \mid
(x,y) \in E \,\}$ and $\{\, y \in X \mid (y,x) \in E  \,\}$ were
denoted in~\cite{CGH15} by $xE$ and $Ex$ respectively.
We defined 
the conditions (S), (R) and (Ti) for any graph
$\X=(X,E)$ as follows:
\begin{enumerate}
\item[(S)] for every $x, y \in X$, if $x \neq y$ then $xE\neq yE$ or $Ex \neq Ey$;
\item[(R)]
\begin{itemize}
\item[(i)] for all $x, z \in X$, if $zE\subsetneq xE$ then $(z,x) \notin E$;
\item[(ii)] for all $y, z \in X$, if $Ez \subsetneq Ey$ then $(y,z) \notin E$;
\end{itemize}
\item[(Ti)] for all $x,y \in X$, if $(x,y) \in E$, then there exists $z \in X$ such that
$zE\subseteq xE$ and $Ez\subseteq Ey$.
\end{enumerate}

A \emph{TiRS graph} was in~\cite{CGH15} defined as a graph
$\X=(X,E)$ with a reflexive relation $E$ and satisfying the
conditions (R), (S) and (Ti).
For any bounded lattice $\Lalg$, its dual graph $\X={\rm D}^\flat(\Lalg)$ is a TiRS graph~\cite[Proposition 2.3]{CGH15}. 


We further recall that a \emph{frame} is a structure $(X_1, X_2,R)$,
where $X_1$ and $X_2$ are non-empty sets and $R\subseteq X_1 \times
X_2$. For an arbitrary frame $\mathbf{F}=(X_1, X_2, R)$ the
conditions (S) and (R) are defined as follows:

\begin{enumerate}
\item[(S)] for all $x_1, x_2 \in X_1$ and $y_1, y_2 \in  X_2$,
\begin{itemize}
\item[(i)] $x_1 \neq x_2$ implies $x_1 R \neq x_2 R$;
\item[(ii)] $y_1 \neq y_2$ implies $R y_1 \neq R y_2$.
\end{itemize}
\item[(R)]
\begin{itemize}
\item[(i)] for every $x \in X_1$ there exists $y\in  X_2$ such that $\neg(xRy)$ and
$\forall w \in X_1$\ $((w \neq x \, \And  x R \subseteq w R) \Rightarrow w R y)$;
\item[(ii)] for every $y\in  X_2$ there exists $x \in X_1$ such that $\neg(xRy)$ and $\forall z \in  X_2$\ $((z \neq y \, \And  R y \subseteq Rz) \Rightarrow x R z)$.
\end{itemize}
\end{enumerate}

The frames that satisfy the conditions (R) and (S) are called
reduced separated frames, or RS frames for short, and were
introduced by Gehrke \cite{Ge06} as a two-sorted generalisation of
Kripke frames to be used for relational semantics of substructural
logics.

The (Ti) condition introduced in~\cite{CGH15} for frames $(X_1,
X_2,R)$ was motivated by the (Ti) condition on graphs:
\begin{enumerate}
\item[(Ti)] for every $x \in X_1$ and for every $y\in  X_2$, if $\neg(x R y)$ then there exist $w \in X_1$ and $z\in  X_2$ such that
\begin{itemize}
\item[(i)]  $\neg(w R z)$;
\item[(ii)] $xR\subseteq wR$ and $Ry\subseteq Rz$;
\item[(iii)] for every $u \in X_1$, if  $u\neq w$ and $wR\subseteq uR$ then $uRz$;
\item[(iv)] for every $v\in X_2$, if $v\neq z$ and  $Rz\subseteq Rv$ then $wRv$.
\end{itemize}
\end{enumerate}

A \emph{TiRS frame} was in \cite{CGH15} defined as a frame $(X_1,
X_2,R)$ that satisfies conditions (R), (S) and (Ti), i.e. it is an
RS frame that satisfies condition (Ti). A one-to-one correspondence
between TiRS graphs and TiRS frames was then shown in \cite{CGH15}.
We recall here some facts of this correspondence that will be needed
in the next section.

\begin{df}[{\cite[Definition 2.5]{CGH15}}]\label{def:equivrel}
Let $\X = (X,E)$ be a graph. The \emph{associated frame} \emph{$\rho(\X)$} is the frame
$(X_1,X_2,R_{\rho(\X)})$ where
\begin{newlist}
\item[\upshape{(i)}] $X_1 = X/{\sim_1}$ for the equivalence relation $\sim_1$ on $X$  given by
\[x \sim_1 y \textrm{  if } xE = yE;\]
\item[\upshape{(ii)}] $X_2 = X/{\sim_2}$ for the equivalence relation $\sim_2$ on $X$  given by
\[x \sim_2 y \textrm{  if } Ex = Ey;\]
\item[\upshape{(iii)}] $R_{\rho(\X)}$ is the relation given by
\[  [x]_1 R_{\rho(\X)} [y]_2 \:  \Longleftrightarrow\: (x,y) \notin E, \] where $[x]_1$ and $[y]_2$ are, respectively, the $\sim_1$-equivalence
class of $x$ and the $\sim_2$-equivalence class of $y$.
\end{newlist}
We omit the subscript ${\rho(\X)}$ in $R_{\rho(\X)}$ whenever it is clear to which relation $R$ refers.
\end{df}



%


If \, $\X=(X,E)$ is a TiRS graph, then the associated frame\, $\rho(\X)=(X_1, X_2, R_{\rho(\X)})$ is a TiRS
frame~\cite[Proposition 2.6]{CGH15}.
Then it follows that if $\Lalg$ is a bounded lattice, $\X={\rm D}^\flat(\Lalg)$ is its dual TiRS graph and  $\rho({\rm D}^\flat(\Lalg))$ is the associated frame,
then $\rho({\rm D}^\flat(\Lalg))$ is a TiRS frame (cf. \cite[Corollary 2.7]{CGH15}).

\begin{df}[{\cite[Definition 2.8]{CGH15}}]\label{def:frame_to_graph1}
Let $\F=(X_1,X_2,R)$ be a TiRS frame. The \emph{associated graph}
$\gr(\F)$ is $(H_\F,K_\F)$ where the vertex set $H_\F$ is the subset of $X_1 \times X_2$
of all pairs $(x,y)$
that satisfy the following conditions:
\begin{newlist}
\item[\upshape{(i)}] $\neg(x R y)$,
\item[\upshape{(ii)}] for every $u \in X_1$, if $u\neq x$ and $xR\subseteq uR$ then $uRy$,
\item[\upshape{(iii)}] for every $v \in X_2$, if $v\neq y$ and $Ry\subseteq Rv$ then $xRv$.
\end{newlist}
and the edge set $K_\F$ is formed by the pairs $((x,y),(w,z))$ such that $\neg(xRz)$.

We omit the subscript $\F$ in $H_\F$ and in $K_\F$ whenever it is clear which vertex set and edge set we refer to.
\end{df}

%

In \cite[Proposition 2.10]{CGH15} we showed that if\, $\F=(X_1,X_2,R)$ is a TiRS frame, then its associated graph $\gr(\F)$ is a TiRS graph.

\begin{df}[{\cite[Definition 2.11]{CGH15}}]\label{def:iso}
Two graphs $\X=(X,E_X)$ and $\Y=(Y,E_Y)$ are \emph{isomorphic}
(denoted
$\X \simeq \Y$)
if there
exists a bijective map $\alpha \colon X \to Y$
such that
$$
\forall x_1, x_2 \in X
\quad (x_1,x_2) \in E_X
\iff (\alpha(x_1), \alpha(x_2)) \in E_Y
$$
and we refer to such a map as the graph-isomorphism $\alpha \colon \X \to \Y$.

Two
frames $\F=(X_1,X_2,R_F)$ and $\mathbf{G}=(Y_1,Y_2,R_G)$ are \emph{isomorphic}
(denoted $\F \simeq \mathbf{G}$)
if there exists
a pair $(\beta_1, \beta_2)$ of bijective maps
$\beta_i \colon X_i \to Y_i$ ($i=1,2$) with
$$
\forall x_1\in X_1\ \forall x_2 \in X_2
\quad
\big(x_1 R_F x_2
\iff \beta_1(x_1) R_G \beta_2(x_2)
\big)
$$
and we refer to such a pair as the
frame-isomorphism
$(\beta_1, \beta_2) \colon \F \to \G$.
\end{df}


Now for a TiRS graph $\X=(X,E)$, a map $\alpha_X \colon X \to gr(\rho(\X))$ is defined by $\alpha_X(x) = ([x]_1,[x]_2)$.
The next result shows that $\alpha_X$ is a graph isomorphism and that the correspondence
between TiRS graphs and TiRS frames is one-to-one.

\begin{thm}[{\cite[Theorem 2.13]{CGH15}}] \label{goodcorrespondence}
Let $\X=(X,E)$ be a TiRS graph and\, $\F=(X_1,X_2,R)$ be a TiRS frame. Then
\begin{newlist}
\item[\rm(a)] the graphs\, $\X$ and\, $\gr(\rho(\X))$ are isomorphic;
\item[\rm(b)] the frames\, $\F$ and\, $\rho(\gr(\F))$ are isomorphic.
\end{newlist}
\end{thm}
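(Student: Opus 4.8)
The plan is to prove the two parts essentially independently, each by exhibiting a concrete bijection and checking that it (and its inverse) respects the relevant relation. For part~(a), I would take the stated map $\alpha_X\colon x\mapsto([x]_1,[x]_2)$ and verify in turn that: (i)~$\alpha_X(x)$ genuinely lands in $H_{\rho(\X)}$, i.e.\ the pair $([x]_1,[x]_2)$ satisfies conditions (i)--(iii) of Definition~\ref{def:frame_to_graph1} for the frame $\rho(\X)$ — condition~(i) is immediate since $x\mathrel{R_{\rho(\X)}}x$ would mean $(x,x)\notin E$, contradicting reflexivity, while conditions~(ii) and~(iii) are exactly restatements of the graph conditions $zE\subseteq xE\Rightarrow\dots$ unwound through Definition~\ref{def:equivrel}(iii), and here the condition (R) on $\X$ is what makes the implication hold in the needed direction; (ii)~$\alpha_X$ is injective, which is precisely the (S) condition for graphs (if $[x]_1=[y]_1$ and $[x]_2=[y]_2$ then $xE=yE$ and $Ex=Ey$, so $x=y$); (iii)~$\alpha_X$ is surjective onto $H_{\rho(\X)}$, which is where the (Ti) condition enters: given $([x]_1,[y]_2)\in H_{\rho(\X)}$ one has $(x,y)\notin E$, so by (Ti) there is $z$ with $zE\subseteq xE$ and $Ez\subseteq Ey$, and the defining conditions (ii),(iii) of $H_{\rho(\X)}$ force these inclusions to be equalities, giving $[z]_1=[x]_1$ and $[z]_2=[y]_2$, so $\alpha_X(z)=([x]_1,[y]_2)$; and finally (iv)~$\alpha_X$ is a graph homomorphism in both directions, i.e.\ $(x_1,x_2)\in E\iff(\alpha_X(x_1),\alpha_X(x_2))\in K_{\rho(\X)}$, which unwinds to $(x_1,x_2)\in E\iff\neg([x_1]_1\mathrel{R_{\rho(\X)}}[x_2]_2)$, and this is just Definition~\ref{def:equivrel}(iii) again.

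For part~(b), I would define the candidate frame-isomorphism $(\beta_1,\beta_2)\colon\F\to\rho(\gr(\F))$ by $\beta_1(x)=[(x,y)]_1$ and $\beta_2(y)=[(x,y)]_2$, where for $\beta_1(x)$ one picks any $y\in X_2$ with $(x,y)\in H_\F$ (such $y$ exists by the (R)(i) clause for frames, which supplies a witness $y$ with $\neg(xRy)$ and the maximality property — one checks $(x,y)\in H_\F$), and symmetrically for $\beta_2(y)$ using (R)(ii). The first task is \emph{well-definedness}: if $(x,y),(x,y')\in H_\F$ then $(x,y)\sim_1(x,y')$ in $\gr(\F)$, i.e.\ $(x,y)K_\F=(x,y')K_\F$; since $((x,y),(w,z))\in K_\F\iff\neg(xRz)$ depends only on the first coordinate $x$, this is automatic — and that same observation shows $\sim_1$-classes in $\gr(\F)$ are determined exactly by the $X_1$-coordinate, which makes $\beta_1$ both well-defined and injective in one stroke; dually for $\beta_2$ using that $\sim_2$-classes depend only on the $X_2$-coordinate (here $E(w,z)$ depends only on $z$). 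Surjectivity of $\beta_1$ holds because every vertex of $\gr(\F)$ has \emph{some} first coordinate, and (R)(i) guarantees that every $x\in X_1$ occurs as a first coordinate of some vertex; dually for $\beta_2$. Finally one checks the relation is preserved: $x\mathrel{R_F}x'$ should be equivalent to $\neg\big(\beta_1(x)\mathrel{R_{\rho(\gr(\F))}}\beta_2(x')\big)$, and unwinding the right-hand side through Definition~\ref{def:equivrel}(iii) and the definition of $K_\F$ gives $\neg(\neg((x,y)K_\F(x',y')))$, i.e.\ $(x,y)K_\F(x',y')$, i.e.\ $\neg(xR_Fx')$ — wait, that is the \emph{negation}, so in fact $x\mathrel{R_F}x'\iff\neg\big(\beta_1(x)\mathrel{R}\beta_2(x')\big)$ fails to be what we want and instead the correct bookkeeping (the frame relation of $\rho$ is the complement of $K$, and $K$ is itself defined by a negation of $R_F$) produces the double negation that cancels, yielding $x\mathrel{R_F}x'\iff\beta_1(x)\mathrel{R}\beta_2(x')$ as required.

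The main obstacle I anticipate is \emph{not} any single hard inference but rather keeping the bookkeeping of the chained negations straight: the frame-to-graph passage $\gr$ uses $\neg(xRz)$ to define edges, the graph-to-frame passage $\rho$ uses $(x,y)\notin E$ to define the relation, and composing them means tracking a double complement, so the delicate point is verifying that these two negations cancel correctly and that the reflexivity of $E$ on $\gr(\F)$ (which must be re-derived, or quoted from \cite[Proposition 2.10]{CGH15}) is compatible with the frame relation being irreflexive-in-the-appropriate-sense. The secondary subtlety is the surjectivity argument in part~(a): one must genuinely use (Ti) together with (R) to upgrade the inclusions $zE\subseteq xE$, $Ez\subseteq Ey$ to equalities — without (R), a strict inclusion $zE\subsetneq xE$ could occur and then $[z]_1\neq[x]_1$, breaking surjectivity; so the interplay of (Ti) supplying a witness and (R) pinning it down is the real content. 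Everything else is diagram-chasing through Definitions~\ref{def:equivrel} and~\ref{def:frame_to_graph1}.
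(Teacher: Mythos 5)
Your overall strategy is the right one and is essentially the standard proof of \cite[Theorem 2.13]{CGH15} (the present paper only quotes the result): for (a) show that $\alpha_X(x)=([x]_1,[x]_2)$ lands in $H_{\rho(\X)}$ via (R), is injective via (S), is surjective via (Ti) together with (R), and matches $E$ with $K_{\rho(\X)}$ by unwinding the two definitions; for (b) build $(\beta_1,\beta_2)$ from $H_\F$-pairs. There are, however, two places where what you assert would not go through as written.

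First, a sign slip in the surjectivity argument of (a): membership of $([x]_1,[y]_2)$ in $H_{\rho(\X)}$ requires $\neg\big([x]_1 R_{\rho(\X)}[y]_2\big)$, which by Definition~\ref{def:equivrel}(iii) unwinds to $(x,y)\in E$, not $(x,y)\notin E$. Fortunately $(x,y)\in E$ is exactly the hypothesis of the graph condition (Ti), so the witness $z$ you invoke does exist, and the rest of your argument (upgrading $zE\subseteq xE$ and $Ez\subseteq Ey$ to equalities via clauses (ii)--(iii) of $H_{\rho(\X)}$-membership, together with reflexivity of $E$ to derive the contradiction $(z,y)\in E$) is sound.

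Second, and more substantively, in part (b) you claim that the witness $y$ supplied by (R)(i) satisfies $(x,y)\in H_\F$. This cannot be checked from (R)(i) alone: that axiom yields $\neg(xRy)$ and clause (ii) of Definition~\ref{def:frame_to_graph1}, but says nothing about clause (iii), the maximality of $y$ in the second coordinate. The existence of some $z$ with $(x,z)\in H_\F$ is true, but it needs (Ti): apply (Ti) to the pair $(x,y)$ given by (R)(i) to obtain $(w,z)\in H_\F$ with $xR\subseteq wR$ and $Ry\subseteq Rz$; if $w\neq x$, the maximality clause of (R)(i) forces $wRy$, hence $w\in Ry\subseteq Rz$ gives $wRz$, contradicting $\neg(wRz)$; so $w=x$ and $(x,z)\in H_\F$. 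Relatedly, your claim that injectivity of $\beta_1$ is ``automatic'' from the observation that $(x,y)K_\F$ depends only on $x$ is backwards: that observation gives well-definedness, while injectivity needs the converse, namely that $(x,y)K_\F=(x',y')K_\F$ forces $xR=x'R$ (whence $x=x'$ by (S)(i)); this in turn requires that every element of $X_2$ occur as the second coordinate of some $H_\F$-pair, which is the dual of the existence statement above and again uses (Ti). So the (R)/(Ti) interplay that you correctly identify as the crux of (a) is equally the crux of (b), and your write-up of (b) omits it.
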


\section{TiRS graph and TiRS frame morphisms}\label{morph}

In this section we extend the one-to-one correspondence between TiRS graphs and TiRS frames from \cite{CGH15} into the full categorical framework. We start by defining
the concepts of TiRS graph and TiRS frame morphisms.

\begin{df}\label{def:graph-mor}
Let $\X = (X,E_X)$ and $\Y = (Y,E_Y)$ be TiRS graphs. A \emph{TiRS
graph morphism} is a map  $\varphi \colon X\to Y$ that satisfies the
following conditions:
\begin{newlist}
\item[(i)] for $x_1, x_2 \in X$, if $(x_1,x_2)\in E_X$ then $(\varphi(x_1),\varphi(x_2))\in E_Y$;
\item[(ii)] for $x_1, x_2 \in X$, if $x_1E_X\subseteq x_2E_X$ then $\varphi(x_1)E_Y \subseteq \varphi(x_2)E_Y$;
\item[(iii)] for $x_1, x_2 \in X$, if $E_Xx_1\subseteq E_Xx_2$ then $E_Y\varphi(x_1) \subseteq E_Y\varphi(x_2)$.
\end{newlist}
\end{df}

We note that every graph isomorphism and its inverse are TiRS graph
morphisms.

\begin{df}\label{def:frame-mor}
Let $\F = (X_1, X_2,R_F)$ and $\G = (Y_1, Y_2,R_G)$ be TiRS frames.
A~\emph{TiRS frame morphism} $\psi \colon \F \to \G$ is a a pair
$\psi=(\psi_1, \psi_2)$ of maps $\psi_1 \colon X_1 \to Y_1$ and
$\psi_2 \colon X_2 \to Y_2$ that satisfies the following conditions:
\begin{newlist}
\item[(i)] for $x\in X_1$ and $y\in X_2$, if $\psi_1(x)R_G\psi_2(y)$ then $xR_Fy$;
\item[(ii)] for $x,w \in X_1$, if $xR_F\subseteq wR_F$ then $\psi_1(x)R_G \subseteq \psi_1(w)R_G$;
\item[(iii)] for $y,z \in X_2$, if $R_Fy\subseteq R_Fz$ then $R_G\psi_2(y) \subseteq R_G\psi_2(z)$;
\item[(iv)] for $x\in X_1$ and $y\in X_2$, if $(x,y)\in H_\F$ then $(\psi_1(x), \psi_2(y))\in H_\G$.
\end{newlist}
\end{df}

We note that a frame isomorphism is a TiRS morphism.

Henceforth we shall refer to TiRS graph morphisms and to TiRS frame
morphisms simply as graph morphisms and frame morphisms
respectively.

Our main result in this section puts our one-to-one correspondence
between TiRS graphs and TiRS frames into a full categorical
framework. The last two statements are illustrated by the diagrams
in Fig.~\ref{fig:mor}.

\begin{thm}\label{thm:TiRSduality}
Let $\X = (X,E_X)$ and $\Y = (Y,E_Y)$ be TiRS graphs and let $\F =
(X_1, X_2,R_F)$ and $\G = (Y_1, Y_2,R_G)$ be TiRS frames.
\begin{itemize}
\item[(1)] If $\varphi \colon \X \to \Y$ is a TiRS graph morphism, then,  for $\rho(\varphi)_1 \colon X/\!\!\!
\sim_1 \to Y/\!\!\!\sim_1$ and $\rho(\varphi)_2 \colon
X/\!\!\!\sim_2 \to Y/\!\!\!\sim_2$ the maps defined by
$\rho(\varphi)_1([x]_1)=[\varphi(x)]_1$ and
$\rho(\varphi)_2([x]_2)=[\varphi(x)]_2$, for all $x\in X$, the pair
$\rho(\varphi)=(\rho(\varphi)_1,\rho(\varphi)_2)$ is a TiRS frame
morphism from $\rho(\X)$ to $\rho(\Y)$.
\item[(2)]
If the pair $\psi=(\psi_1,\psi_2) \colon \F \to \G$ is a TiRS frame
morphism, then the  map $\gr(\psi) \colon \gr(\F) \to \gr(\G)$
defined by $\gr(\psi)(x,y)=(\psi_1(x),\psi_2(y))$, for $(x,y) \in
H_F$, is a TiRS graph morphism.
\item[(3)] If $\varphi \colon \X \to \Y$ is a TiRS graph morphism, then $\gr(\rho(\varphi))\circ \alpha_X=\alpha_Y \circ \varphi$.
\item[(4)] If $\psi \colon \F \to \G$ is a TiRS frame morphism, then $\rho(\gr(\psi))\circ \beta_F=\beta_G \circ \psi$.
\end{itemize}
\end{thm}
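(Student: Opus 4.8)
The four parts of Theorem~\ref{thm:TiRSduality} split naturally into two independent tasks: parts~(1) and~(2) establish that $\rho$ and $\gr$ act on morphisms (functoriality of the object-level constructions), while parts~(3) and~(4) check that the natural isomorphisms $\alpha_X$ and $\beta_F$ of Theorem~\ref{goodcorrespondence} are in fact natural in the morphisms, i.e.\ that the two squares in Fig.~\ref{fig:mor} commute. I would treat (1)+(2) first, since (3)+(4) depend on having those assignments well-defined.

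For part~(1) the plan is: first check that $\rho(\varphi)_1$ and $\rho(\varphi)_2$ are well-defined, i.e.\ independent of the representative chosen from a $\sim_1$- or $\sim_2$-class. This is exactly where graph-morphism conditions (ii) and (iii) of Definition~\ref{def:graph-mor} are used: if $x \sim_1 x'$ then $xE_X = x'E_X$, so in particular $xE_X \subseteq x'E_X$ and $x'E_X \subseteq xE_X$; applying (ii) twice gives $\varphi(x)E_Y = \varphi(x')E_Y$, hence $[\varphi(x)]_1 = [\varphi(x')]_1$. Symmetrically for $\sim_2$ using (iii). Then I would verify the four defining properties of a TiRS frame morphism for the pair $\rho(\varphi)$. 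Here one has to translate each condition in Definition~\ref{def:frame-mor} through the definition of $R_{\rho(\X)}$, namely $[x]_1 R [y]_2 \iff (x,y)\notin E$. Condition~(i) of the frame morphism (if $\rho(\varphi)_1([x]_1) R_{\rho(\Y)} \rho(\varphi)_2([y]_2)$ then $[x]_1 R_{\rho(\X)} [y]_2$) unwinds to: if $(\varphi(x),\varphi(y)) \notin E_Y$ then $(x,y)\notin E_X$, which is the contrapositive of graph-morphism condition~(i). Conditions~(ii) and~(iii) of the frame morphism follow from conditions~(ii),~(iii) of the graph morphism once one observes that $[x]_1 R_{\rho(\X)} = \{[y]_2 : (x,y)\notin E_X\}$ and that inclusion of these ``row sets'' corresponds (via complementation inside $xE_X$, $yE_X$) to inclusion of the $E_X$-successor/predecessor sets. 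Condition~(iv) of the frame morphism, requiring $(\rho(\varphi)_1([x]_1),\rho(\varphi)_2([y]_2)) \in H_{\rho(\Y)}$ whenever $([x]_1,[y]_2) \in H_{\rho(\X)}$, will need the $\alpha_X$-description: $H_{\rho(\X)}$ is (up to the isomorphism $\alpha_X$) just the image of $X$, so a pair lies in $H_{\rho(\X)}$ iff it equals $\alpha_X(x) = ([x]_1,[x]_2)$ for some $x$; then $(\rho(\varphi)_1([x]_1),\rho(\varphi)_2([x]_2)) = ([\varphi(x)]_1,[\varphi(x)]_2) = \alpha_Y(\varphi(x)) \in H_{\rho(\Y)}$. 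For part~(2) the plan is dual: one shows that if $(x,y) \in H_F$ then $(\psi_1(x),\psi_2(y)) \in H_G$ (this is precisely frame-morphism condition~(iv), so $\gr(\psi)$ lands in the right set), and then the three graph-morphism conditions for $\gr(\psi)$ follow from frame-morphism conditions~(i),~(ii),~(iii) via the definition of the edge relation $K_\F$ (an edge $((x,y),(w,z))$ is present iff $\neg(xRz)$).

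For parts~(3) and~(4), the plan is a direct computation on elements. For~(3): take $x \in X$; then $(\gr(\rho(\varphi))\circ\alpha_X)(x) = \gr(\rho(\varphi))([x]_1,[x]_2) = (\rho(\varphi)_1([x]_1),\rho(\varphi)_2([x]_2)) = ([\varphi(x)]_1,[\varphi(x)]_2) = \alpha_Y(\varphi(x)) = (\alpha_Y\circ\varphi)(x)$, so the two sides agree. The only subtlety is that $\gr(\rho(\varphi))$ is only defined on $H_{\rho(\X)}$, so one must first invoke part~(1) together with the isomorphism $\alpha_X\colon \X \to \gr(\rho(\X))$ from Theorem~\ref{goodcorrespondence}(a) to know that $\alpha_X(x)$ genuinely lies in the domain. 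Part~(4) is the analogous bookkeeping using Theorem~\ref{goodcorrespondence}(b) and the frame-isomorphism $\beta_F = (\beta_{F,1},\beta_{F,2})$, applied coordinatewise; here one checks separately that $(\rho(\gr(\psi))_1 \circ \beta_{F,1})(x_1) = (\beta_{G,1}\circ\psi_1)(x_1)$ and the analogous identity in the second coordinate.

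I expect the main obstacle to be part~(1), and within it the verification of the well-definedness and of conditions~(ii)–(iv) of the frame morphism: these require carefully keeping straight the double role of each element of $X$ (once as a $\sim_1$-class contributing to $X_1$, once as a $\sim_2$-class contributing to $X_2$) and the order-reversal introduced by passing from $E$ to its complement $R$. None of the steps is deep, but the translation between the graph language ($xE$, $Ex$, edges) and the frame language ($xR$, $Ry$, the set $H$) is bureaucratic and error-prone, so the write-up should fix notation for the complementation bijections at the outset. Parts~(2)–(4) are then either formally dual (part~(2)) or immediate diagram chases (parts~(3),(4)).
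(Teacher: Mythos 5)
Your plan follows the paper's own proof essentially step for step: well-definedness of $\rho(\varphi)_1,\rho(\varphi)_2$ via graph-morphism conditions (ii)--(iii), frame-morphism condition (i) as the contrapositive of graph-morphism condition (i), the order reversal between $E$-successor sets and $R$-row sets for conditions (ii)--(iii), condition (iv) via the identification of $H_{\rho(\X)}$ with the pairs $([x]_1,[x]_2)$, the dual argument for part (2), and direct element computations for parts (3)--(4). The approach is correct and matches the paper's proof.
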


\begin{proof}
(1) First we show that $\rho(\varphi)_1$ is well defined. Let $x, y
\in X$. If $[x]_1=[y]_1$ then $xE_X=yE_X$ which implies
$\varphi(x)E_Y=\varphi(y)E_Y$ and so
$[\varphi(x)]_1=[\varphi(y)]_1$, by the definition of a TiRS graph
morphism. Similarly we prove that $[\varphi(x)]_2=[\varphi(y)]_2$
whenever $[x]_2=[y]_2$. Next we prove that conditions (i) to (iv) of
the definition of a TiRS frame morphism are satisfied by
$\rho(\varphi)$. For (i), let $x,y \in X$ and assume
$\rho(\varphi)_1([x]_1)R_{\rho(\Y)}\rho(\varphi)_2([y]_2)$.
 Then $(\varphi(x),\varphi(y))\notin E_Y$ yielding
  that $(x,y)\notin E_X$ and so $[x]_1R_{\rho(\X)}[y]_2$.
For (ii), let $x, w \in X$. Then the following holds:
\begin{align*}
[x]_1R_{\rho(\X)}\subseteq [w]_1R_{\rho(\X)}&\iff w E_X \subseteq
xE_X
\Rightarrow \varphi(w)E_Y \subseteq \varphi(x)E_Y\\
& \iff [\varphi(x)]_1R_{\rho(\Y)}\subseteq
[\varphi(w)]_1R_{\rho(\Y)}.
\end{align*}
Hence (ii) is satisfied. Similarly we conclude that (iii) holds.
Finally (iv) follows from
\cite[Lemma 2.12]{CGH15} where we showed that for a TiRS graph $\X=(X,E)$, the elements of $H_{\rho(\X)}$ are exactly the pairs $([x]_1,[x]_2)$, with $x \in X$.

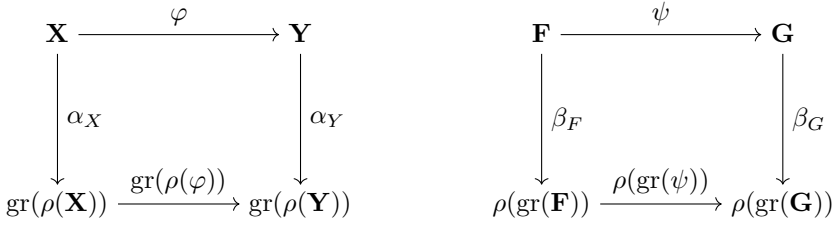
\begin{figure}
\begin{center}
\begin{tabular}{cc}
\begin{tikzpicture}
\matrix (m) [matrix of math nodes, row sep=2.5em, column sep=2.3em,
text  height=1.5ex, text depth=0.25ex]
{{\X} & & {\Y} & & {\F} & & {\G} \\
                    & &                 & &                 & & \\
{\gr(\rho(\X))}   &  &  {\gr(\rho(\Y))} & & {\rho(\gr(\F))}   &  &
{\rho(\gr(\G))} \\};

\path[->] (m-1-1) edge node[auto] {$\varphi$} (m-1-3); \path[->]
(m-1-1) edge node[auto] {$\alpha_X$} (m-3-1); \path[->] (m-1-3) edge
node[auto] {$\alpha_Y$} (m-3-3); \path[->] (m-3-1) edge node[auto]
{$\gr(\rho(\varphi))$} (m-3-3);

\path[->] (m-1-5) edge node[auto] {$\psi$} (m-1-7); \path[->]
(m-3-5) edge node[auto] {$\rho(\gr(\psi))$} (m-3-7); \path[->]
(m-1-7) edge node[auto] {$\beta_G$} (m-3-7); \path[->] (m-1-5) edge
node[auto] {$\beta_F$} (m-3-5);
\end{tikzpicture}
\end{tabular}
\end{center}
\caption{TiRS graph morphisms and TiRS frame morphisms
\label{fig:mor}}
\end{figure}

(2) First we note that condition (iv) of the definition of a TiRS
frame morphism satisfied by $\psi$  guarantees that the map
$\gr(\psi) $ is well defined. Next we prove that conditions (i) to
(iii) of the definition of a TiRS graph morphism are satisfied by
$\gr(\psi) $. Let $(x,y), (w,z)\in H_\F$. If $((x,y), (w,z))\in
K_\F$ then $\neg(xR_Fz)$ which implies
$\neg(\psi_1(x)R_\G\psi_2(z))$ and consequently $$(\gr(\psi)(x,y),
\gr(\psi)(w,z))=((\psi_1(x),\psi_2(y)), (\psi_1(w),\psi_2(z)))\in
K_\G.$$ Hence $(x,y), (w,z)$ satisfies (i). For (ii), we observe
that,
$$
(x,y)K_\F\subseteq (w,z)K_\F \iff wR_F\subseteq xR_F
$$
and
$$
\gr(\psi)(x,y)K_\G\subseteq \gr(\psi)(w,z)K_\G \iff
\psi_1(w)R_G\subseteq \psi_1(x)R_G,
$$
which follows from (iii) of \cite[Lemma 3.9]{CGH15}. As
$\psi$ is a TiRS morphism,
we also have
$$
wR_F\subseteq xR_F \Rightarrow \psi_1(w)R_G\subseteq \psi_1(x)R_G.
$$ Hence $(x,y), (w,z)$ satisfies (ii). Similarly we conclude that (iii) also holds.

(3) Let $x \in X$. We have that
\begin{align}
(\gr(\rho(\varphi))\circ \alpha_X)(x)&=\gr(\rho(\varphi))([x]_1,[x]_2)\notag\\
&=(\rho(\varphi)_1([x]_1), \rho(\varphi)_2([x]_2))\notag\\
&=([\varphi(x)]_1, [\varphi(x)]_2)\notag\\
&=(\alpha_Y\circ \varphi)(x).\notag
\end{align}

(4) Let $x \in X_1$. There exist $y\in X_2$ such that $(x,y)\in
H_\F$. We have that
\begin{align}
(\rho(\gr(\psi))\circ \beta_F)(x)&=\rho(\gr(\psi)([(x,y)]_1)\notag\\
&=[\gr(\psi)(x,y)]_1\notag\\
&=[(\psi_1(x),\psi_2(y))]_1,\notag
\end{align}
where $\psi_1 \colon X_1 \to Y_1$ and $\psi_2 \colon X_2 \to Y_2$
satisfy $\psi=(\psi_1,\psi_2)$. Since ${(x,y)\in H_\F}$ and $\psi$
is a TiRS morphism, we also have $(\psi_1(x),\psi_2(y))\in H_\G$ and
so
$$[(\psi_1(x),\psi_2(y))]_1=\beta_G(\psi(x))=
(\beta_G \circ \psi) (x). $$
\end{proof}

\begin{cor}\label{cor:equivalence}
The category of TiRS graphs with TiRS graph morphisms is equivalent
to the category of  TiRS frames with TiRS frame morphisms via the
functors given by $\rho$ and $\gr$ as described above.
\end{cor}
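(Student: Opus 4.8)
The plan is to derive the equivalence directly from Theorem~\ref{thm:TiRSduality} and Theorem~\ref{goodcorrespondence}: I will first promote $\rho$ and $\gr$ to genuine functors, and then recognise the families $(\alpha_X)_X$ and $(\beta_\F)_\F$ as natural isomorphisms $\id\Rightarrow\gr\circ\rho$ and $\id\Rightarrow\rho\circ\gr$. The equivalence then follows from the standard criterion that if functors $F\colon\mathcal C\to\mathcal D$ and $G\colon\mathcal D\to\mathcal C$ satisfy $G\circ F\cong\id_{\mathcal C}$ and $F\circ G\cong\id_{\mathcal D}$, then $F$ and $G$ are mutually inverse equivalences.

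Before anything else I would check that TiRS graphs with TiRS graph morphisms, and TiRS frames with TiRS frame morphisms, really do form categories. The identity map on a TiRS graph plainly satisfies (i)--(iii) of Definition~\ref{def:graph-mor}, and if $\varphi\colon\X\to\Y$ and $\varphi'\colon\Y\to\Z$ are graph morphisms then so is $\varphi'\circ\varphi$, since each of conditions (i)--(iii) has the shape ``an edge, or an $E$-inclusion, on the source side forces the matching edge or inclusion on the target side'', and such implications chain. The frame case is analogous, using Definition~\ref{def:frame-mor}; the only slightly less mechanical point is condition~(iv), which nonetheless chains, because a composite of two maps that each send vertices of the associated graphs to vertices again has this property. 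Functoriality of $\rho$ and $\gr$ is then immediate from their action on morphisms recorded in Theorem~\ref{thm:TiRSduality}(1),(2): from $\rho(\varphi)_i([x]_i)=[\varphi(x)]_i$ one reads off $\rho(\id_\X)=\id_{\rho(\X)}$ and $\rho(\varphi'\circ\varphi)=\rho(\varphi')\circ\rho(\varphi)$, and from $\gr(\psi)(x,y)=(\psi_1(x),\psi_2(y))$ one reads off $\gr(\id_\F)=\id_{\gr(\F)}$ and $\gr(\psi'\circ\psi)=\gr(\psi')\circ\gr(\psi)$. On objects the closure properties needed for $\rho$ and $\gr$ to land in the right category (the associated frame of a TiRS graph is a TiRS frame, and the associated graph of a TiRS frame is a TiRS graph) are already quoted in the excerpt.

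It remains to assemble the natural isomorphisms. By Theorem~\ref{goodcorrespondence}(a) each $\alpha_X\colon\X\to\gr(\rho(\X))$ is a graph isomorphism, hence --- by the remark following Definition~\ref{def:graph-mor}, which says a graph isomorphism and its inverse are TiRS graph morphisms --- an isomorphism in the category of TiRS graphs; and Theorem~\ref{thm:TiRSduality}(3), namely $\gr(\rho(\varphi))\circ\alpha_X=\alpha_Y\circ\varphi$ for every graph morphism $\varphi\colon\X\to\Y$, is exactly the naturality square for $\alpha\colon\id\Rightarrow\gr\circ\rho$. Being objectwise invertible, $\alpha$ is a natural isomorphism. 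Dually, Theorem~\ref{goodcorrespondence}(b) together with the remark after Definition~\ref{def:frame-mor} makes each $\beta_\F\colon\F\to\rho(\gr(\F))$ an isomorphism of TiRS frames, while Theorem~\ref{thm:TiRSduality}(4) gives the naturality identity $\rho(\gr(\psi))\circ\beta_\F=\beta_\G\circ\psi$; hence $\beta\colon\id\Rightarrow\rho\circ\gr$ is a natural isomorphism too. Since $\gr\circ\rho\cong\id$ and $\rho\circ\gr\cong\id$, the functors $\rho$ and $\gr$ form an equivalence of categories, which is the assertion.

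\textbf{Main obstacle.} Essentially all the mathematical content is already carried by the two theorems invoked, so the remaining work is bookkeeping: confirming the morphism classes are closed under composition and contain identities, and that $\rho$ and $\gr$ preserve identities and composition. The one spot that warrants a moment's care is the clause involving the vertex sets $H_\F$ --- condition~(iv) for frames and its counterpart for graphs --- where one must be sure the ``membership-preserving'' part composes correctly; this uses nothing beyond Definitions~\ref{def:frame-mor} and~\ref{def:graph-mor} and the description of $H_{\rho(\X)}$ recalled inside the proof of Theorem~\ref{thm:TiRSduality}(1). No new idea is required.
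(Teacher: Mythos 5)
Your proposal is correct and follows exactly the route the paper intends: the corollary is stated without proof precisely because it is the standard consequence of Theorem~\ref{thm:TiRSduality} (functoriality and the commuting squares) together with Theorem~\ref{goodcorrespondence} (objectwise invertibility of $\alpha$ and $\beta$), and your write-up supplies just the bookkeeping the paper leaves implicit. The only point worth a footnote is that for $\beta$ you need the inverse of a frame isomorphism to again be a TiRS frame morphism; this holds because the inverse of a frame isomorphism is itself a frame isomorphism, so the remark after Definition~\ref{def:frame-mor} applies to it as well.
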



\bigskip

There are other definitions of morphisms between two frames (or contexts) $\mathbf{F}=(X_1,X_2,R_F)$ and
$\mathbf{G}=(Y_1,Y_2,R_G)$ that are used in the literature.
Deiters and Ern\'{e}~\cite{DE09} use a pair of maps $(\alpha,\beta)$ where $\alpha \colon X_1 \to Y_1$ and $\beta \colon X_2 \to Y_2$
as we do above.
Gehrke~\cite[Section 3]{Ge06} uses a pair of relations $(R,S)$ where $R\subseteq X_2 \times Y_1$ and $S \subseteq X_1 \times Y_2$.
More recently, Moshier~\cite{Mosh12} (see also Jipsen~\cite{Jip12}) defined a context morphism to
be a single relation $S\subseteq X_1 \times Y_2$.

\section{Perfect lattices dual to TiRS structures}

Consider a complete lattice $\mathbf{C}$ and let
$\mathbf{F}(\mathbf{C})=(\jty(\C),\mty(\C),\leqslant)$ where
$\jty(\C)$ and $\mty(\C)$ denote the sets of
completely join-irreducible and completely
meet-irreducible elements of
$\C$, respectively. We will refer to $\mathbf{F}(\C)$ as the frame
coming from $\C$.
For the opposite direction, consider an
RS frame
$\mathbb{F}=(X,Y,R)$. For $A\subseteq X$ and $B \subseteq Y$, let
$$\RTR(A)=\{\, y \in Y \mid (\forall a \in A)(aRy)\,\} \ \text{and}\ \RTL(B)=\{\, x \in X \mid (\forall b \in B)(xRb)\,\}.$$
Now consider the complete lattice of Galois-closed sets
(ordered by inclusion):
$$\mathcal{G}(\mathbb{F})=\{\, A \subseteq X \mid A=(\RTL \circ \RTR)(A)\,\}.$$
By results from Gehrke~\cite[Section 2]{Ge06} we know that the completely  join-irreducible elements and completely meet-irreducible elements of
$\mathcal{G}(\mathbb{F})$ are identified as follows:
$$\jty(\mathcal{G}(\mathbb{F}))=\{\, (\RTL \circ\RTR)(\{x\}) \mid x \in X\,\} \ \text{and} \
\mty(\mathcal{G}(\mathbb{F}))=\{\, Ry \mid y \in Y\,\}.$$

Below we introduce a condition that refines the class of perfect lattices. At the end of this section we will conclude that
every perfect lattice that is
the canonical extension of some bounded lattice will have this property.

\begin{df} A perfect lattice satisfies the condition (PTi) if for all $x\in \jty(\C)$ and for all $y \in \mty(\C)$, if
$ x\nleq y$ then there exist $w \in \jty(\C)$, $z \in \mty(\C)$ such that
\begin{enumerate}[(i)]
\item $w \leq x$ and $y \leq z$
\item $w \nleq z$
\item $(\forall u \in \jty(\C))(u < w \Rightarrow u \leq z)$
\item $(\forall v \in \mty(\C))(y < v \Rightarrow w \leq v)$
\end{enumerate}
\end{df}

In Fig.~\ref{fig:PTi} we give a pictorial depiction of the (PTi)
condition. We have indicated the sets ${\uparrow}x$, ${\uparrow}w$,
${\downarrow}y$ and ${\downarrow}z$.
We see that the (PTi) condition for $\C$ essentially starts with an
arbitrary disjoint filter-ideal pair $({\uparrow}x,{\downarrow}y)$
generated by elements $x\in \jty(\C)$ and $y \in \mty(\C)$. It says
that every such disjoint filter-ideal pair is contained in a maximal
disjoint filter-ideal pair $({\uparrow}w,{\downarrow}z)$ where again
$w\in \jty(\C)$ and
$z \in \mty(\C)$
(here maximality is understood such that
neither of
${\uparrow}w$ and ${\downarrow}z$ can be
enlarged
without breaking their disjointness).

\begin{figure}
\begin{center}
\begin{tikzpicture}
\draw (0,0) ellipse (1cm and 2cm);
\smallpo{0,2}
\node at (0.25,2.15) {$1$};
\smallpo{0,-2}
\node at (0.25,-2.15) {$0$};
\smallpo{0.5,0}
\node at (0.7,0) {$x$};
\li{(0.866,1)--(0.5,0)--(-0.661,1.5)}
\smallpo{-0.3,-1}
\node at (-0.1,-1) {$y$};
\li{(-0.661,-1.5)--(-0.3,-1)--(0.484,-1.75)}
\smallpo{0.5,-0.7}
\node at (0.5,-0.4) {$w$};
\li{(-0.866,1)--(0.5,-0.7)--(1,0)}
\smallpo{-0.7,0.3}
\node at (-0.9,0.3) {$z$};
\li{(-1,0)--(-0.7,0.3)--(0.745,-1.33)}
\end{tikzpicture}
\caption{The (PTi) condition illustrated. \label{fig:PTi}}
\end{center}
\end{figure}
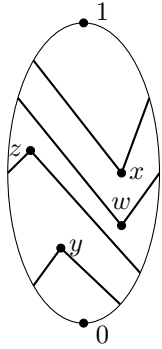

\begin{lem}\label{lem:PTi-to-Ti} Let $\C$ be a perfect lattice. If\, $\C$ satisfies {\upshape(PTi)} then the RS frame
$\mathbf{F}(\C)=(\jty(\C),\mty(\C),\leq)$ satisfies {\upshape(Ti)}.
\end{lem}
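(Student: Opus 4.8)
The plan is to translate the (PTi) condition on a perfect lattice $\C$ directly into the (Ti) condition on its frame $\mathbf{F}(\C)=(\jty(\C),\mty(\C),\leq)$, noting that the two conditions are verbatim transcriptions of each other once one reads the relation $R$ as $\leq$. First I would recall that for the frame $\mathbf{F}(\C)$ we have $x R y$ iff $x \leq y$; hence $\neg(xRy)$ means $x \nleq y$. Given $x \in \jty(\C)$ and $y \in \mty(\C)$ with $\neg(xRy)$, i.e. $x \nleq y$, the hypothesis (PTi) produces $w \in \jty(\C)$ and $z \in \mty(\C)$ satisfying (i)--(iv) of the definition; I would set these as the witnesses $w \in X_1$, $z \in X_2$ required by (Ti) and check the four clauses (i)--(iv) of (Ti) in turn.

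For clause (i) of (Ti), $\neg(wRz)$ is exactly $w \nleq z$, which is (PTi)(ii). For clause (ii) of (Ti), I must show $xR \subseteq wR$ and $Ry \subseteq Rz$; now $xR = \{\, y' \in \mty(\C) \mid x \leq y'\,\} = {\uparrow}x \cap \mty(\C)$ and similarly $wR = {\uparrow}w \cap \mty(\C)$, so $xR \subseteq wR$ is equivalent to: every completely meet-irreducible above $x$ is above $w$. Since $w \leq x$ by (PTi)(i), this is immediate; dually $Ry \subseteq Rz$ follows from $y \leq z$. For clause (iii) of (Ti), I need: for every $u \in X_1 = \jty(\C)$, if $u \neq w$ and $wR \subseteq uR$ then $uRz$, i.e. $u \leq z$. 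The inclusion $wR \subseteq uR$ says every completely meet-irreducible above $w$ is above $u$; in a perfect lattice each element is a meet of completely meet-irreducibles, so $wR \subseteq uR$ forces $u \leq w$. Combined with $u \neq w$ this gives $u < w$, and then (PTi)(iii) yields $u \leq z$ as required. Clause (iv) of (Ti) is handled dually, using that each element of a perfect lattice is a join of completely join-irreducibles to deduce $z \leq v$ from $Rz \subseteq Rv$, and then invoking (PTi)(iv).

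The only genuine point needing care — and the step I would flag as the main (minor) obstacle — is the translation between the set-theoretic inclusions $wR \subseteq uR$, $Rz \subseteq Rv$ appearing in (Ti)(iii)--(iv) and the order relations $u \leq w$, $z \leq v$ used in (PTi)(iii)--(iv). This is where one must use the defining property of a \emph{perfect} lattice (every element is both a join of completely join-irreducibles and a meet of completely meet-irreducibles): from this it follows that for $a,b \in \C$ one has $a \leq b$ iff ${\uparrow}b \cap \mty(\C) \subseteq {\uparrow}a \cap \mty(\C)$, and dually with $\jty(\C)$. Once this equivalence is recorded, all four clauses of (Ti) fall out of the four clauses of (PTi) by direct substitution, so the proof is short. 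I would present the argument by fixing $x \nleq y$, applying (PTi), and then verifying (i)--(iv) of (Ti) in that order, inserting the inclusion-versus-order dictionary exactly at clauses (iii) and (iv).
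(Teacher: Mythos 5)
Your proof is correct and follows essentially the same route as the paper's (much terser) argument: both rest on the dictionary, valid in a perfect lattice, between the order relations appearing in (PTi) and the inclusions of the sets $xR = {\uparrow}x \cap \mty(\C)$ and $Ry = {\downarrow}y \cap \jty(\C)$ appearing in (Ti). The only micro-step you leave implicit is that in clause (iv) one passes from $z < v$ to the hypothesis $y < v$ of (PTi)(iv) via $y \leq z$, which is immediate.
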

\begin{proof} First observe that when translating the condition (Ti) from a general RS frame to $\mathbb{F}(\C)$ we have that
$xR={\uparrow}x$ and $Ry={\downarrow}y$. The fact that $\mathbb{F}(\C)$ satisfies (Ti)
follows then
from the fact that $u < w$ implies $u\neq w$ and ${\uparrow}w \subsetneq {\uparrow}u$.
\end{proof}

We want to characterise the condition (PTi) on the Galois closed sets arising from an RS frame $\mathbb{F}=(X,Y,R)$.
The following lemma will assist us in this task.
\begin{lem} \label{lem:R-upsets} Consider the RS frame $\mathbb{F}=(X,Y,R)$. Then
\begin{newlist}
\item[{\upshape (i)}] $w \in (\RTL\circ\RTR)(\{x\})$ if and only if $xR \subseteq wR$;
\item[{\upshape (ii)}] $(\RTL\circ\RTR)(\{w\})\subseteq (\RTL\circ\RTR)(\{x\})$ if and only if $xR\subseteq wR$;
\item[{\upshape (iii)}] $(\RTL\circ\RTR)(\{x\})\subseteq Ry$ if and only if $xRy$.
\end{newlist}
\end{lem}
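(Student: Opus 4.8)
The plan is to unwind the definitions of the polar operators $\RTR$ and $\RTL$ and use only the set-theoretic bookkeeping of the relation $R$; no RS axioms are needed for any of the three parts, so the argument is entirely formal. Recall that for a singleton, $\RTR(\{x\})=\{\,y\in Y\mid xRy\,\}=xR$, and that $\RTL(B)=\{\,x\in X\mid (\forall b\in B)\,xRb\,\}$; also note the standard Galois facts $B\subseteq B'\Rightarrow\RTL(B')\subseteq\RTL(B)$, $A\subseteq A'\Rightarrow\RTR(A')\subseteq\RTR(A)$, $A\subseteq(\RTL\circ\RTR)(A)$, and $\RTR\circ\RTL\circ\RTR=\RTR$ (antitone Galois connection). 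I would state these at the top of the proof and then treat each item in turn.

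For (i): by the computation above, $w\in(\RTL\circ\RTR)(\{x\})=\RTL(xR)$ means precisely that $wRy$ for every $y\in xR$, i.e.\ $xR\subseteq wR$. That is the whole argument — one line after the definitions are expanded.

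For (ii): the ``if'' direction follows because $\RTL\circ\RTR$ is a closure operator, hence monotone, so $xR\subseteq wR$ gives $(\RTL\circ\RTR)(\{w\})\subseteq(\RTL\circ\RTR)(\{x\})$ once one observes $(\RTL\circ\RTR)(\{x\})$ depends only on $\RTR(\{x\})=xR$ (indeed $(\RTL\circ\RTR)(\{x\})=\RTL(xR)$, and $\RTL$ is antitone). For the ``only if'' direction, note $w\in(\RTL\circ\RTR)(\{w\})$ always, so if $(\RTL\circ\RTR)(\{w\})\subseteq(\RTL\circ\RTR)(\{x\})$ then $w\in(\RTL\circ\RTR)(\{x\})$, and (i) gives $xR\subseteq wR$. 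For (iii): $(\RTL\circ\RTR)(\{x\})\subseteq Ry$ — here $Ry=\{\,x'\in X\mid x'Ry\,\}$ — says every element of the closure of $\{x\}$ is $R$-related to $y$; since $x$ itself lies in that closure, this forces $xRy$, giving ``only if''. Conversely, if $xRy$ then $y\in\RTR(\{x\})=xR$, so any $x'\in\RTL(xR)=(\RTL\circ\RTR)(\{x\})$ satisfies $x'Ry$, i.e.\ $x'\in Ry$; hence the inclusion holds.

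The only thing that could be called an ``obstacle'' is purely notational: keeping straight that $\RTR$ sends subsets of $X$ to subsets of $Y$ while $\RTL$ goes the other way, and that $xR$ and $Ry$ in the displayed statements are being used both as sections of $R$ and as the polars of singletons — but identifying $\RTR(\{x\})$ with $xR$ and $\RTL$ restricted to such sets with ``the set of common $R$-predecessors'' removes any ambiguity. I expect the proof in the paper to be three or four short lines, essentially as above.
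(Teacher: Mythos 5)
Your proof is correct and follows essentially the same route as the paper's: part (i) by unwinding the definition of $\RTL\circ\RTR$ on a singleton, and part (ii) from the antitonicity of the Galois maps (where the paper uses $\RTR\circ\RTL\circ\RTR=\RTR$ for the converse, you use extensivity plus (i), which is an equivalent formality). The only difference is that for (iii) the paper simply cites Gehrke's Proposition 2.6 instead of giving the short direct argument you supply, which is also fine.
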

\begin{proof} For (i) we have
\begin{align*}
w \in (\RTL\circ\RTR)(\{x\}) \quad &\Leftrightarrow \quad (\forall z \in \RTR(\{x\}))(wRz) \\
&\Leftrightarrow \quad (\forall z \in Y)(xRz \Rightarrow wRz) \\
&\Leftrightarrow \quad xR \subseteq wR.
\end{align*}
To assist with the proof of (ii), note that $\RTR(\{x\})=Rx$ and $\RTR(\{w\})=Rw$. If we assume that $xR \subseteq wR$ then the fact that
$\RTL: \powerset(Y) \to \powerset(X)$ is order-reversing gives us that $(\RTL\circ\RTR)(\{w\})\subseteq (\RTL\circ\RTR)(\{x\})$.
For the converse, if $(\RTL\circ\RTR)(\{w\})\subseteq (\RTL\circ\RTR)(\{x\})$ then since
$\RTR:\powerset(X)\to\powerset(Y)$ is order-reversing and since
$(\RTR\circ\RTL\circ\RTR)(\{w\})=\RTR(\{w\})$, we get $xR \subseteq wR$.
The statement (iii) is exactly~\cite[Proposition 2.6]{Ge06}.
\end{proof}

We want to prove that when an RS frame $\mathbb{F}=(X,Y,R)$ satisfies the (Ti) condition, the perfect lattice of Galois-closed sets
$\mathcal{G}(\mathbb{F})$ satisfies (PTi). In order to make the proof easier to follow, it will be useful to translate the condition (PTi) from
the setting of a general perfect lattice to the setting of $\mathcal{G}(\mathbb{F})$.

\begin{lem} Let $\mathbb{F}=(X,Y,R)$ be an RS frame.
Assume that the following set of conditions is satisfied by $\mathcal{G}(\mathbb{F})$:

For all $x \in X$ and all $y\in Y$, if $\neg(xRy)$ then there exist $p\in X$, $q \in Y$ such that
\begin{newlist}
\item[{\upshape(i)}] $xR \subseteq pR$ and $Ry \subseteq Rq$
\item[{\upshape(ii)}] $\neg(pRq)$
\item[{\upshape(iii)}] $(\forall u \in X)(pR \subsetneq uR \Rightarrow uRq)$
\item[{\upshape(iv)}] $(\forall v \in Y)(Rq \subsetneq Rv \Rightarrow pRv)$
\end{newlist}
\end{lem}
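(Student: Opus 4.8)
The plan is to obtain the statement by a direct, clause‑by‑clause translation of the condition (PTi) for the perfect lattice $\C=\mathcal{G}(\mathbb{F})$ into the language of the relation $R$. The two tools required are both already available: Gehrke's description of the completely irreducible elements, namely $\jty(\mathcal{G}(\mathbb{F}))=\{(\RTL\circ\RTR)(\{x\})\mid x\in X\}$ and $\mty(\mathcal{G}(\mathbb{F}))=\{Ry\mid y\in Y\}$, together with Lemma~\ref{lem:R-upsets}. Throughout I write $\widehat{x}=(\RTL\circ\RTR)(\{x\})$ for $x\in X$.

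First I would set up the dictionary between the two settings. Every completely join‑irreducible of $\C$ is some $\widehat{x}$ and every completely meet‑irreducible is some $Ry$. By Lemma~\ref{lem:R-upsets}(ii), $\widehat{w}\subseteq\widehat{x}$ is equivalent to $xR\subseteq wR$; applying this both ways gives $\widehat{w}=\widehat{x}\iff wR=xR$, hence $\widehat{u}\subsetneq\widehat{w}\iff wR\subsetneq uR$. Inclusions and strict inclusions between meet‑irreducibles $Ry,Rz$ are literally inclusions of subsets of $Y$, so need no rewriting. Finally Lemma~\ref{lem:R-upsets}(iii) gives $\widehat{x}\subseteq Ry\iff xRy$, so ``$\widehat{x}\nsubseteq Ry$'' becomes ``$\neg(xRy)$'' and likewise ``$\widehat{w}\nsubseteq Rz$'' becomes ``$\neg(wRz)$''.

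With this dictionary the two implications are routine. For one direction, assume $\C$ satisfies (PTi) and take $x\in X$, $y\in Y$ with $\neg(xRy)$; then $\widehat{x}\in\jty(\C)$, $Ry\in\mty(\C)$, $\widehat{x}\nsubseteq Ry$, so (PTi) produces $\widehat{p}\in\jty(\C)$ and $Rq\in\mty(\C)$ — that is, some $p\in X$, $q\in Y$ — and pushing clauses (i)--(iv) of (PTi) through the dictionary yields exactly clauses (i)--(iv) of the displayed frame conditions. For the converse one runs the same equivalences backwards: given the frame conditions and an arbitrary pair $\widehat{x}\in\jty(\C)$, $Ry_0\in\mty(\C)$ with $\widehat{x}\nsubseteq Ry_0$, the dictionary turns this into $\neg(xRy_0)$, the frame conditions hand back $p\in X$, $q\in Y$, and $\widehat{p}$, $Rq$ then witness (PTi) for the pair $\widehat{x}$, $Ry_0$. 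Since every completely join‑ and meet‑irreducible of $\C$ arises this way, $\C$ satisfies (PTi).

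I do not anticipate a real obstacle, since the mathematical content is entirely carried by Lemma~\ref{lem:R-upsets} and Gehrke's identification of $\jty$ and $\mty$. The one point worth a careful sentence is the treatment of the strict‑inclusion clauses (PTi)(iii) and (PTi)(iv): one has to check that $\widehat{u}\subsetneq\widehat{w}$ translates to the \emph{strict} inclusion $wR\subsetneq uR$, which uses Lemma~\ref{lem:R-upsets}(i)--(ii) — and, through it, the separation property of the RS frame — to rule out $wR=uR$ with $\widehat{u}\neq\widehat{w}$; the companion clause for meet‑irreducibles is immediate because there a strict inclusion $Rq\subsetneq Rv$ is already a strict inclusion of subsets of $Y$. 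Once this is observed the verification is purely formal.
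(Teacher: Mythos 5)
Your proof is correct and is essentially the paper's own argument: the paper's entire proof is the single sentence that the claim ``follows using Lemma~\ref{lem:R-upsets} to translate (PTi) to $\mathcal{G}(\mathbb{F})$'', and your dictionary --- Gehrke's identification of $\jty(\mathcal{G}(\mathbb{F}))$ and $\mty(\mathcal{G}(\mathbb{F}))$ together with Lemma~\ref{lem:R-upsets}(i)--(iii), plus the observation that applying part (ii) in both directions converts strict inclusions of join-irreducibles into strict inclusions of the sets $uR$ (no appeal to separation is actually needed for that step) --- is exactly the intended filling-in of that sentence. The one caveat is that your claim that the translation yields ``exactly'' clause (iv) of (PTi) holds only if one reads the paper's ``$y<v$'' in (PTi)(iv) as the evidently intended ``$z<v$'' (matching $Rq\subsetneq Rv$ and the symmetric clause (iii)); as literally printed (PTi)(iv) is strictly stronger than frame clause (iv), but this is an apparent typo in the definition rather than a defect of your argument, and the paper's own one-line proof glosses over the same point.
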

Then the lattice $\mathcal{G}(\mathbb{F})$ satisfies (PTi).
\begin{proof}
This follows using Lemma~\ref{lem:R-upsets} to translate (PTi) conditions to the complete lattice $\mathcal{G}(\mathbb{F})$.
\end{proof}

\begin{lem} \label{lem:Ti-to-PTi} Let $\mathbb{F}=(X,Y,R)$ be an RS frame. If\, $\mathbb{F}$ satisfies {\upshape(Ti)} then $\mathcal{G}(\mathbb{F})$ satisfies
{\upshape(PTi)}.
\end{lem}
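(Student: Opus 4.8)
The plan is to reduce the statement to the preceding lemma, which guarantees that $\mathcal{G}(\mathbb{F})$ satisfies (PTi) provided the translated frame-level conditions (i)--(iv) listed there hold for $\mathbb{F}=(X,Y,R)$. So it suffices to derive those four conditions from the hypothesis that $\mathbb{F}$ satisfies (Ti), recalling that, being an RS frame, $\mathbb{F}$ also satisfies (S).

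First I would fix $x\in X$ and $y\in Y$ with $\neg(xRy)$ and apply (Ti) to obtain $w\in X$ and $z\in Y$ satisfying clauses (i)--(iv) of the frame (Ti) condition. Setting $p:=w$ and $q:=z$, clauses (i) and (ii) of the translated statement are then literally clauses (ii) and (i) of (Ti), so nothing is needed there. The only real content lies in clauses (iii) and (iv): (Ti)(iii) carries the hypothesis ``$u\neq w$ and $wR\subseteq uR$'', whereas the translated (iii) carries the single hypothesis ``$pR\subsetneq uR$''. To bridge this I would invoke the separation axiom (S)(i) of $\mathbb{F}$, namely that for $u,w\in X$ one has $u\neq w$ iff $uR\neq wR$; hence ``$pR\subsetneq uR$'' (that is, $wR\subseteq uR$ together with $wR\neq uR$) is exactly ``$wR\subseteq uR$ and $u\neq w$'', and (Ti)(iii) then yields $uRz=uRq$. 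The verification of the translated (iv) is symmetric, using (S)(ii) ($z\neq v$ iff $Rz\neq Rv$). Having checked all four translated conditions, the preceding lemma delivers (PTi) for $\mathcal{G}(\mathbb{F})$.

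I do not expect a genuine obstacle here: the whole argument is unwinding definitions, and the one point worth flagging is precisely the use of separation to pass freely between ``distinct elements'' and ``distinct $R$-images'', which is what converts the non-strict-containment-plus-apartness hypotheses of (Ti) into the strict-containment hypotheses appearing in the translated (PTi) conditions. The only care required is routine bookkeeping with $\subseteq$ versus $\subsetneq$.
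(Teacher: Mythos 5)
Your proof is correct and follows essentially the same route as the paper: both reduce (PTi) for $\mathcal{G}(\mathbb{F})$ to the translated frame-level conditions of the preceding lemma and then discharge them by applying (Ti), using separation to identify ``$pR \subsetneq uR$'' with ``$u\neq p$ and $pR\subseteq uR$''. The only cosmetic difference is that the paper first extracts a witness $w$ of $(\RTL\circ\RTR)(\{x\})\nsubseteq Ry$ and applies (Ti) to the pair $(w,y)$, whereas you apply (Ti) directly to $(x,y)$; both work.
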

\begin{proof}

Let $\mathbb{F}=(X,Y,R)$ be an RS frame satisfying {\upshape(Ti)} (i.e. a TiRS frame). Take arbitrary $x \in X$ and $y\in Y$ and assume that $\neg(xRy)$.
In the perfect lattice $\mathcal{G}(\mathbb{F})$ coming from $\mathbb{F}$ consider the sets $A=(\RTL\circ \RTR)(\{x\})$ and $B=Ry$.
Then $A \in\jty(\mathcal{G}(\mathbb{F}))$, $B \in \mty(\mathcal{G}(\mathbb{F})$ and $A \nsubseteq B$ using Lemma~\ref{lem:R-upsets}(iii).

We have that
\begin{align*}
(\RTL\circ\RTR)(\{x\}) \nsubseteq Ry \quad &\Rightarrow \quad (\exists w \in X)(xR \subseteq wR \:\&\: \neg(wRy))\\
&\Rightarrow
\quad (\exists w \in X)\Big[ xR \subseteq wR \:\&\: \\
&\qquad \:\: (\exists p \in X)(\exists q \in Y)\Big(\neg(pRq) \:\&\: wR\subseteq pR \:\&\:Ry \subseteq Rq \\
&\qquad \quad \:\&\: (\forall u \in X)(pR\subsetneq uR \Rightarrow uRq )  \\
&\qquad \qquad \:\&\:(\forall v \in Y)(Rq \subsetneq Rv \Rightarrow pRv) \Big)\Big] \\
\end{align*}
The only part of the (PTi) condition for $\mathcal{G}(\mathbb{F})$ that is not now immediate is the fact that we need
$xR \subseteq pR$. This follows from the $xR \subseteq wR\subseteq pR$ and the transitivity of set containment.
\end{proof}

Now we are ready to show that the canonical extensions of lattices
are PTi lattices and so they indeed are `more' than just perfect
lattices. For this we cite our final result from~\cite{CGH15}:

\begin{prop}[{\cite[Corollary 3.11]{CGH15}}]\label{P3-3.11}
Let\, $\Lalg$ be a bounded lattice and\, $\X ={\rm D}^\flat(\Lalg)$
be its dual TiRS graph. Let $\rho(\X)$ be the frame associated to\,
$\X$ and\, $\mathrm{G}(\rho(\X))$ be its corresponding perfect
lattice of Galois-closed sets.

The lattice\, $\mathrm{G}(\rho(\X))$ is the canonical extension of\,
$\Lalg$.
\end{prop}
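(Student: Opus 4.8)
The plan is to identify $\mathrm{G}(\rho(\X))$ with the Gehrke--Harding canonical extension $\Lalg^\delta$ by producing a frame isomorphism $\rho(\X)\cong\mathbf{F}(\Lalg^\delta)$, where $\mathbf{F}(\Lalg^\delta)=(\jty(\Lalg^\delta),\mty(\Lalg^\delta),\le)$, and then reading off the conclusion from Gehrke's correspondence between perfect lattices and RS frames. By the last sentence of Lemma~\ref{GH-Lem3.4} the complete lattice $\Lalg^\delta$ is perfect, so $\mathrm{G}(\mathbf{F}(\Lalg^\delta))\cong\Lalg^\delta$ by \cite{Ge06}; since isomorphic frames have isomorphic lattices of Galois-closed sets, a frame isomorphism $\rho(\X)\cong\mathbf{F}(\Lalg^\delta)$ yields $\mathrm{G}(\rho(\X))\cong\Lalg^\delta$. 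By uniqueness of the canonical extension it then suffices to note that the natural embedding of $\Lalg$ into $\mathrm{G}(\rho(\X))$ (sending $a$ to the Galois closure of $\{[f]_1 : f(a)=1\}$) corresponds to $e$ under this isomorphism, which is a routine check.

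To construct the frame isomorphism I would use the Urquhart--Plo\v{s}\v{c}ica correspondence underlying ${\rm D}^\flat(\Lalg)$: for $f\in\mph{\Lalg}{\twoB}$ the set $f^{-1}(1)$ is a filter of $\Lalg$ and $f^{-1}(0)$ an ideal, the pair $(f^{-1}(1),f^{-1}(0))$ is a maximal pair, and every maximal pair of $\Lalg$ arises from a unique such $f$. Since $(f,g)\in E$ depends on $f$ only through $f^{-1}(1)$, we have $fE=gE$ whenever $f^{-1}(1)=g^{-1}(1)$; conversely, if $a\in f^{-1}(1)\setminus g^{-1}(1)$ then the disjoint pair $(g^{-1}(1),{\downarrow}a)$ extends (by Zorn) to a maximal pair whose partial homomorphism lies in $gE\setminus fE$. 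Hence $X_1=X/{\sim_1}$ is in bijection with the family $\mathcal{F}$ of filters occurring as the filter part of some maximal pair, and dually $X_2=X/{\sim_2}$ with the family $\mathcal{I}$ of ideals so occurring. By Lemma~\ref{GH-Lem3.4}(1),(2) the assignments $F\mapsto\bigwedge e[F]$ and $I\mapsto\bigvee e[I]$ send $\mathcal{F}$ onto $\jty(\Lalg^\delta)$ and $\mathcal{I}$ onto $\mty(\Lalg^\delta)$; they are injective because density and compactness of the completion make $F\mapsto\bigwedge e[F]$ an order-reversing bijection of $\Filt(\Lalg)$ onto the filter elements of $\Lalg^\delta$ (dually for ideals). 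Composing, we obtain bijections $X_1\to\jty(\Lalg^\delta)$ and $X_2\to\mty(\Lalg^\delta)$.

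It remains to check that these bijections carry $R_{\rho(\X)}$ to $\le$. By Definition~\ref{def:equivrel}, $[f]_1 R_{\rho(\X)} [g]_2$ means $(f,g)\notin E$, that is, $f^{-1}(1)\cap g^{-1}(0)\neq\emptyset$. For a filter $F$ and an ideal $I$ the condition $F\cap I\neq\emptyset$ holds if and only if there are $a\in F$ and $b\in I$ with $a\le b$, and by compactness of $(e,\Lalg^\delta)$ this is exactly $\bigwedge e[F]\le\bigvee e[I]$. Therefore $[f]_1 R_{\rho(\X)} [g]_2$ if and only if the completely join-irreducible element of $\Lalg^\delta$ corresponding to $[f]_1$ lies below the completely meet-irreducible element corresponding to $[g]_2$, which is precisely the relation of $\mathbf{F}(\Lalg^\delta)$. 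This gives $\rho(\X)\cong\mathbf{F}(\Lalg^\delta)$ and hence the proposition.

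The step I expect to be the main obstacle is the analysis of the quotients $\sim_1$ and $\sim_2$: one must verify simultaneously that distinct filters (resp.\ ideals) coming from maximal pairs are never identified and that no completely join-irreducible (resp.\ meet-irreducible) element of $\Lalg^\delta$ is missed, i.e.\ that the maps $X_1\to\jty(\Lalg^\delta)$ and $X_2\to\mty(\Lalg^\delta)$ are genuine bijections. This is exactly where Lemma~\ref{GH-Lem3.4} does the essential work, and where the interplay between maximality of a partial homomorphism and maximality of the associated disjoint filter--ideal pair must be handled carefully. Once that is in place, the compactness identification of $F\cap I\neq\emptyset$ with $\bigwedge e[F]\le\bigvee e[I]$ and the final comparison of embeddings are straightforward.
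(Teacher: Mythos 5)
Your proposal is correct, but it is worth noting that the paper gives no proof of this proposition at all: it is quoted verbatim from \cite[Corollary~3.11]{CGH15}, and the surrounding discussion only sketches the route taken there, namely running Plo\v{s}\v{c}ica's representation and Gehrke's RS-frame representation ``in tandem'' and invoking the earlier identification (from \cite{CHP11}) of the canonical extension with a concrete complete lattice built directly on the graph $\mathrm{D}^\flat(\Lalg)$. Your argument is a genuinely self-contained alternative: you bypass the intermediate graph-theoretic construction and instead exhibit a frame isomorphism $\rho(\X)\cong\mathbf{F}(\Lalg^\delta)$ directly, using the Urquhart--Plo\v{s}\v{c}ica bijection between maximal partial homomorphisms and maximal filter--ideal pairs, Lemma~\ref{GH-Lem3.4} to match the $\sim_1$- and $\sim_2$-classes with $\jty(\Lalg^\delta)$ and $\mty(\Lalg^\delta)$, and compactness to translate $F\cap I\neq\emptyset$ into $\bigwedge e[F]\le\bigvee e[I]$; Gehrke's theorem that a perfect lattice is recovered from its RS frame then finishes the job. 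The individual steps all check out (in particular your Zorn argument producing $h\in gE\setminus fE$ from $a\in f^{-1}(1)\setminus g^{-1}(1)$ is sound, since $g^{-1}(1)\cap{\downarrow}a=\emptyset$ follows from $g^{-1}(1)$ being up-closed, and injectivity of $F\mapsto\bigwedge e[F]$ is a standard compactness argument). What your route buys is independence from \cite{CHP11}; what the cited route buys is that the isomorphism is realised concretely on subsets of the dual graph, which is what \cite{CGH15} needs elsewhere. The only point I would press you to write out is the final ``routine check'' that the natural embedding $a\mapsto(\RTL\circ\RTR)(\{[f]_1 : f(a)=1\})$ corresponds to $e$ under your isomorphism --- strictly speaking the proposition only asserts that the lattice \emph{is} a canonical extension, so transporting $e$ along the abstract isomorphism already suffices, but if you want the natural embedding then this verification (that $\{[f]_1: f(a)=1\}$ is Galois-closed and maps to $e(a)$) should be done explicitly.
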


The result can be illustrated by the diagram in
Fig.~\ref{fig:tandem}. The given bounded lattice $\Lalg$ is firstly
assigned its Plo\v{s}\v{c}ica dual space $\D{\Lalg} =
(\mph{\Lalg}{\twoB},E,\T)$, and then the Plo\v{s}\v{c}ica dual graph
$\X ={\rm D}^\flat(\Lalg)= (\mph{\Lalg}{\twoB},E)$ is obtained by
forgetting the topology. This is a TiRS graph and so the frame
$\rho(\X)$ associated to\, $\X$ in our one-to-one correspondence
developed in \cite{CGH15} between TiRS graphs and TiRS frames is a
TiRS frame. Hence by Lemma~\ref{lem:Ti-to-PTi} above, the perfect
lattice $\mathrm{G}(\rho(\X))$ of Galois-closed sets corresponding
in Gehrke's representation to the frame $\rho(\X)$ is a PTi
lattice. By Proposition~\ref{P3-3.11}, the lattice
$\mathrm{G}(\rho(\X))$ is the canonical extension of the given
lattice $\Lalg$.

\begin{figure}  [ht]
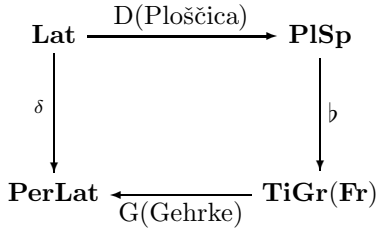

\begin{diagram}[labelstyle=\textstyle]
\mathbf{Lat}& \rTo^{{\mathrm D} (\text{Plo\v{s}\v{c}ica})}
& \mathbf{PlSp}\\
\dTo^{^\delta} & & \dTo_{\flat}\\
\mathbf{PerLat}&  \lTo_{{\mathrm G} (\text{Gehrke})} & \mathbf{TiGr}
(\mathbf{Fr})
\end{diagram}
\caption{Plo\v{s}\v{c}ica and Gehrke in tandem. \label{fig:tandem}}
\end{figure}

Hence we have our final result of this section:

\begin{thm}\label{thm:CE-is-PTi}
The canonical extension of
a
bounded lattice is a PTi
lattice. 
\end{thm}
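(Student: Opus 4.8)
The plan is to assemble the result directly from the machinery already built up in this section, since everything needed has been stated. The key observation is that Theorem~\ref{thm:CE-is-PTi} is essentially the endpoint of the ``tandem'' diagram in Fig.~\ref{fig:tandem}, and each arrow in that diagram has already been justified. So the proof is a short chain of citations with a one-sentence glue argument.

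\begin{proof}
Let $\Lalg$ be a bounded lattice. By Plo\v s\v cica's representation, $\Lalg$ has a dual graph with topology $\D{\Lalg}=(\mph{\Lalg}{\twoB},E,\T)$; forgetting the topology yields the graph $\X={\rm D}^\flat(\Lalg)=(\mph{\Lalg}{\twoB},E)$, which is a TiRS graph by \cite[Proposition 2.3]{CGH15}. By \cite[Proposition 2.6]{CGH15} (see also \cite[Corollary 2.7]{CGH15}), the associated frame $\rho(\X)$ is then a TiRS frame, i.e.\ an RS frame satisfying {\upshape(Ti)}. Applying Lemma~\ref{lem:Ti-to-PTi} to the RS frame $\rho(\X)$, we conclude that the perfect lattice of Galois-closed sets $\mathrm{G}(\rho(\X))$ satisfies {\upshape(PTi)}, that is, it is a PTi lattice. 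Finally, by Proposition~\ref{P3-3.11} the lattice $\mathrm{G}(\rho(\X))$ is (isomorphic to) the canonical extension $\Lalg^\delta$ of $\Lalg$. Hence $\Lalg^\delta$ is a PTi lattice.
\end{proof}

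The only genuine content lies outside this short argument, namely in Lemma~\ref{lem:Ti-to-PTi} (that {\upshape(Ti)} on an RS frame forces {\upshape(PTi)} on its Galois-closed sets) and in the identification of $\mathrm{G}(\rho(\X))$ with $\Lalg^\delta$ from \cite{CGH15}; both are available to us here, so no obstacle remains. One should double-check only that the translation between the two forms of {\upshape(PTi)}---the abstract one for perfect lattices and the frame-theoretic one used in Lemma~\ref{lem:Ti-to-PTi}---matches, but this is exactly the bookkeeping carried out via Lemma~\ref{lem:R-upsets}, where $xR$ corresponds to ${\uparrow}x$ and $Ry$ to ${\downarrow}y$ once one passes through the identifications $\jty(\mathcal{G}(\mathbb{F}))=\{(\RTL\circ\RTR)(\{x\})\mid x\in X\}$ and $\mty(\mathcal{G}(\mathbb{F}))=\{Ry\mid y\in Y\}$. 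Because all of this has been set up, the proof of Theorem~\ref{thm:CE-is-PTi} is genuinely a two-line composition of cited facts, and the ``main obstacle'' has already been dispatched in the lemmas preceding it.
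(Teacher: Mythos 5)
Your proof is correct and follows exactly the paper's own argument: the paper establishes Theorem~\ref{thm:CE-is-PTi} by the same chain (Plo\v{s}\v{c}ica dual graph $\X={\rm D}^\flat(\Lalg)$ is TiRS, so $\rho(\X)$ is a TiRS frame, Lemma~\ref{lem:Ti-to-PTi} gives that $\mathrm{G}(\rho(\X))$ is PTi, and Proposition~\ref{P3-3.11} identifies it with $\Lalg^\delta$). No differences worth noting.
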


Gehrke
and Vosmaer~\cite{GV11} showed that the canonical extension of a lattice need not be meet-continuous, and hence need not always be
algebraic.
Theorem~\ref{thm:CE-is-PTi} gives us further information about the structure of canonical extensions of bounded lattices.

\section{Examples}

Our goal in this section is to
illustrate
that the PTi condition adds to the current description of the
canonical extension of a bounded lattice.
We focus on non-distributive examples. Canonical extensions of distributive lattices are known to be completely distributive complete lattices.
To show that our new condition does indeed add to the current
description, we give an
example of a perfect lattice that is not PTi. Giving an example of a
PTi lattice that is not the canonical extension of a lattice would
be the same as giving an example of a TiRS graph that is not of the
form $(\mph{\Lalg}{\twoB},E)$ for some bounded lattice $\Lalg$.
Hence this is the same as the representable TiRS graph
(representable poset) problem.

Our goals are:
\begin{enumerate}
\item Give an example of a complete
non-distributive
lattice which is a PTi lattice but is not the canonical extension of any bounded lattice.
\item Give an example of a perfect
non-distributive
lattice that is not a PTi lattice. 
\end{enumerate}

\begin{figure}[ht]
\begin{center}
\begin{tikzpicture}[scale=1]

\node (p0) at (1,0) {$p_0$};
\node (p1) at (1,0.8) {$p_1$};
\node (p2) at (1,1.6) {$p_2$};
\node (p3) at (1,2.4) {};
\draw [->>] (p0) edge (p1);
\draw [->>] (p1) edge (p2);
\draw [->>] (p2) edge (p3);

\dotli{(1,2.4)--(1,2.8)}
\dotli{(1,3.2)--(1,3.6)}

\node (q0) at (1,6) {$q_0$};
\node (q1) at (1,5.2) {$q_1$};
\node (q2) at (1,4.4) {$q_2$};
\node (q3) at (1,3.6) {};
\draw [->>] (q1) edge (q0);
\draw [->>] (q2) edge (q1);
\draw [->>] (q3) edge (q2);

\node (k) at (-2,3) {$k$};
\draw [->] (k) edge (q1);
\draw [->] (k) edge (q2);
\draw [->] (k) edge (p0);
\draw [->] (k) edge (p1);
\draw [->] (k) edge (p2);

\draw [->] (k) edge (p3);
\draw [->] (k) edge (q3);

%
%

\po{5.75,2.5} \po{7,0.45} \po{7,0.95} \po{7,1.45} \po{7,2.5}
\po{7,3.55} \po{7,4.05} \po{7,4.55}

\dotli{(7,0.95)--(7,2.5)} \dotli{(7,4.05)--(7,2.5)}
\li{(7,0.45)--(5.75,2.5)--(7,4.55)--(7,4.05)}
\li{(7,0.45)--(7,0.95)--(7,1.45)} \li{(7,3.55)--(7,4.05)--(7,4.55)}

\node at (5.15,2.5) {$z=y$};
\node at (7.4,4.05) {$x$};
\node at (7.4,2.5) {$m$};
\node at (7.4,0.95) {$w$};
\node at (7,0.05) {$A_L$};

\end{tikzpicture}
\caption{A TiRS graph that is not the graph of MPH's of any bounded lattice (left) and its dual PTi lattice $A_L$ that is not a
canonical extension (right). The double-headed arrows on the graph emphasize that transitivity holds amongst the vertical edges. \label{fig:A_L}}
\end{center}
\end{figure}
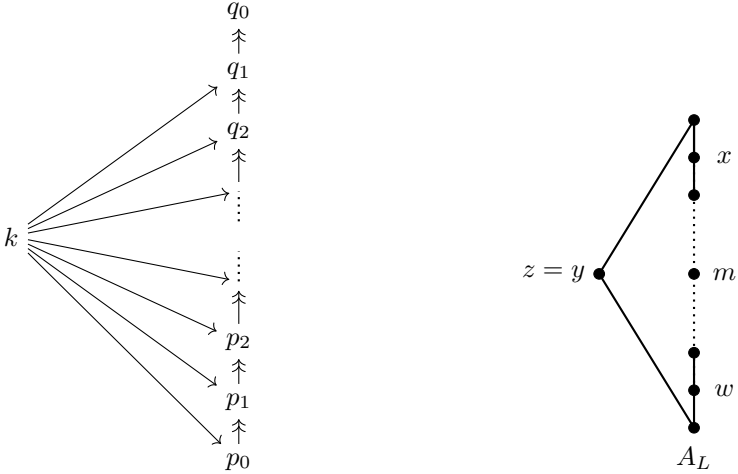

\begin{ex}\label{A_L}
Consider the complete lattice $A_L$
depicted on the right in~Fig.~\ref{fig:A_L}.
We will denote by $m$, the middle element of the infinite chain $\omega \oplus \mathbf{1} \oplus \omega^{\partial}$
and $y$ is above the bottom and below the top but incomparable with all other elements.

The TiRS graph dual to $A_L$ is $X= \{p_i \mid i \in \omega\} \cup \{q_j \mid j \in \omega\} \cup \{k\}$
with the relation $E$ given by
$$p_0 < p_1 < p_2 < \ldots < p_n < p_{n+1} < \ldots < q_{n+1} < q_n < q_{n-1} \ldots <q_1 < q_0$$
$$ \cup \{ (k,p_i)\mid i \in \omega\} \cup \{ (k,q_j) \mid j \geqslant 1 \}$$
(it is depicted on the left in~Fig.~\ref{fig:A_L}).
To be clear, the $p_i$'s and $q_j$'s form a poset (it is transitive) that is order-isomorphic to $\omega \oplus \omega^{\partial}$ while the
element $k$ is related to everything except the top of the chain.

Recall that MPE's are ordered by: $\varphi \leqslant \psi$ if and only if $\varphi^{-1}(1)\subseteq \psi^{-1}(1)$.
%
The MPE's $\varphi_1$ and $\varphi_0$ are defined by
$\varphi_1(x)=1$ and $\varphi_0(x)=0$ for all $x \in X$. 
All but one of the other MPE's have $k \mapsto 0$ and then they split the chain at some point.
When $\varphi$ splits the chain by sending the $p_i$'s to $0$ and the $q_j$'s to $1$ then you get the limit point in the middle of $A_L$. The interesting MPE is the map does the following for $a \in X$:
$$\varphi(a) = \begin{cases} 1 & \text{ if } a = k \\ 0 & \text{ if } a = q_0 \\ - &\text{ otherwise} \end{cases}$$
This interesting MPE is the incomparable point that makes $A_L$ non-distributive.

It is quite easy to show that the lattice $A_L$ is a PTi lattice; we indicated on the right in~Fig.~\ref{fig:A_L} what the elements $w \in \jty(A_L)$ and $z \in \mty(A_L)$
are for the chosen elements $x\in \jty(A_L)$ and $y \in \mty(A_L)$. The fact that the lattice $A_L$ is not the canonical extension of any bounded lattice is harder to show and it
follows from Proposition~\ref{prop:notCEbutPTi} below.
\end{ex}

\begin{prop}\label{prop:notCEbutPTi}
There is no bounded lattice $\Lalg$ and lattice embedding $e \colon \Lalg \to A_L$ such that
$(e,A_L)$ is the canonical extension of $\Lalg$.
\end{prop}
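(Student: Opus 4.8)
The plan is to argue by contradiction: suppose there were a bounded lattice $\Lalg$ and an embedding $e \colon \Lalg \to A_L$ making $(e,A_L)$ a canonical extension. By Lemma~\ref{GH-Lem3.4}, every element of $A_L$ is a join of elements of $\jty(A_L)$ and a meet of elements of $\mty(A_L)$, and moreover $x \in \jty(A_L)$ iff $x = \bigwedge e[F]$ for some maximal filter-ideal pair $(F,I)$, with the dual statement for $\mty(A_L)$. So the first step is to read off $\jty(A_L)$ and $\mty(A_L)$ directly from the picture: the join-irreducibles are the elements $w = p_\bullet$ of the lower $\omega$-chain together with the ``interesting'' incomparable element $y$ (call it $c$), and the meet-irreducibles are the elements $z = q_\bullet$ of the upper $\omega^\partial$-chain together with $c$ again — here $c$ is both completely join- and meet-irreducible. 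Crucially, the middle element $m$ of the central chain $\omega \oplus \mathbf 1 \oplus \omega^\partial$ is \emph{neither} completely join-irreducible (it is the join of the lower chain, which does not contain it) \emph{nor} completely meet-irreducible (dually).

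The second step is to exploit compactness of the completion. Since $m = \bigvee \jty(A_L){\restriction}{\downarrow}m = \bigvee\{p_i : i \in \omega\}$ and dually $m = \bigwedge\{q_j : j \in \omega\}$, and since filter elements of a canonical extension are meets of elements of $e[\Lalg]$ while ideal elements are joins, one shows that $m$ lies both in $\mathbb F(A_L)$ and in $\mathbb I(A_L)$ only if compactness is violated — or, more precisely, one pins down exactly which elements of $A_L$ must be in the image $e[\Lalg]$. The heart of the argument: in a canonical extension the elements of $e[\Lalg]$ are exactly the elements that are \emph{both} filter and ideal elements (the ``clopen'' elements), and by density each $J^\infty$-element is a meet of such, each $M^\infty$-element a join of such. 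One then checks that in $A_L$ the only candidate for such a two-sided element sitting strictly between the lower chain and $m$, or between $m$ and the upper chain, would force an infinite strictly ascending or descending chain inside a \emph{finite} subset witnessing compactness — contradiction. Concretely: the inequality $\bigwedge\{q_j\} = m = \bigvee\{p_i\} \le \bigvee\{p_i\}$ would, by compactness applied with $A = \{q_j\}$ and a finite $B' \subseteq \{p_i\}$, give $m = \bigwedge\{q_j\} \le p_N$ for some $N$, which is false since $p_N < m$ strictly. The subtlety is that compactness is stated for $A \subseteq \mathbb F(\C)$, $B \subseteq \mathbb I(\C)$, so one must first verify $m \in \mathbb F(A_L) \cap \mathbb I(A_L)$, which is where density of the putative completion and the identification of $e[\Lalg]$ enter.

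I expect the main obstacle to be the bookkeeping needed to rule out that $m$ (or some other ``bad'' element) is a filter or ideal element: one must show that no element of $A_L$ other than $0$, $1$, the $p_i$, the $q_j$ and $c$ can belong to $e[\Lalg]$, and then that $m$ cannot be a meet of such elements on one side and a join on the other without contradicting compactness. A clean way to organise this is: (a) identify $e[\Lalg] \subseteq \{0,1,c\} \cup \{p_i\} \cup \{q_j\}$ using density together with the $J^\infty/M^\infty$ description; (b) observe $\mathbb F(A_L)$ is then the meet-closure of this set and $\mathbb I(A_L)$ its join-closure; (c) compute that $m \notin \mathbb F(A_L)$, because any nonempty meet of elements from $\{0,1,c\}\cup\{p_i\}\cup\{q_j\}$ equal to $m$ would have to be a meet of $q_j$'s, hence by compactness a finite such meet, hence some $q_j = m$, false; (d) dually $m \notin \mathbb I(A_L)$. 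Then $m$ is an element of $A_L$ that is neither a meet of filter elements below-compatibly nor — actually the contradiction is already reached at step (c)/(d) once one notes that every element of a canonical extension \emph{is} simultaneously expressible appropriately, or more simply that $m \in A_L = \C$ must be a join of $J^\infty$-elements (true: $m = \bigvee p_i$) \emph{and} these $p_i \in \mathbb I(\C)$, while $m = \bigwedge q_j$ with $q_j \in \mathbb F(\C)$, and compactness applied to $\bigwedge\{q_j\} \le \bigvee\{p_i\}$ yields finitely many of each with $q_{j_0} \le p_{i_0}$, contradicting $p_{i_0} < m \le q_{j_0}$. Writing this last chain of implications carefully, with the roles of $\mathbb F$ and $\mathbb I$ correct, is the one place where an error could creep in.
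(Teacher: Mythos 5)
Your final chain of implications is exactly the paper's argument: the $q_j$'s are completely join-irreducible, hence filter elements by Lemma~\ref{GH-Lem3.4}, the $p_i$'s are completely meet-irreducible, hence ideal elements, and compactness applied to $\bigwedge\{q_j\} = m = \bigvee\{p_i\}$ yields $q_{j_0} \le p_{i_0}$ for some indices, contradicting $p_{i_0} < m < q_{j_0}$. One caveat: your intermediate steps (c) and (d) are wrong, not merely superfluous --- $m$ \emph{is} a filter element and an ideal element of any such completion (it is a meet of the $q_j \in e[\Lalg]$ and a join of the $p_i \in e[\Lalg]$; indeed the paper uses exactly this to conclude $m \in e[\Lalg]$ and hence $\Lalg \cong A_L$), and compactness does not let you replace an infinite meet by a finite one as you claim there. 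Since your closing paragraph discards that route and runs the compactness argument correctly, the proof stands, but steps (c)/(d) should be deleted rather than ``organised''.
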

\begin{proof}
Suppose there are no bounded lattice $\Lalg$ and an embedding $e
\colon \Lalg \to A_L$ such that $(e,A_L)$ is the canonical extension
of $\Lalg$. Clearly the top and bottom element of $A_L$ are,
respectively $e(1)$ and $e(0)$ where $1$ and $0$ are the top and
bottom element of $\Lalg$. Now consider the set of elements
$(A_L)\setminus \{e(0),e(1),m\}$. It is easy to see that each of
these elements is completely join-irreducible in $A_L$ and hence we
have, by~Lemma~\ref{GH-Lem3.4}, that each of these elements is the
meet of the embedding of a 
filter of $L$. Hence each of
the elements of $(A_L)\setminus \{e(0),e(1),m\}$ is a filter
element. Dually, it is easy to see that each element of
$(A_L)\setminus \{e(0),e(1),m\}$ is completely meet-irreducible and
again by~Lemma~\ref{GH-Lem3.4} they are all the join of the
embedding of an 
ideal of $L$ and hence are all ideal
elements. Thus every element of $(A_L)\setminus \{e(0),e(1),m\}$ is
both ideal and filter and hence must be of the form $e(a)$ for some
$a \in L$. Now consider the element $m$. Since $m = \bigwedge
\omega^{\partial}$, and since every element of $\omega^{\partial}$
is the image of an element of $L$ under $e$, we have that $m$ is a
filter element of $A_L$. Also, $m = \bigvee \omega$ and every
element of $\omega$ is the image of an element of $L$ under $e$.
Therefore $m$ is also an ideal element of $A_L$. Hence $m$ must be
of the form $e(b)$ for some $b \in L$. Thus we have that $\Lalg
\cong A_L$ and that the embedding $e$ is a bijection.

Now we show that $(e,A_L)$ cannot be the canonical extension of $\Lalg$. Observe that since $m= \bigwedge \omega^{\partial}=\bigvee \omega$ we
have that $\bigwedge \omega^{\partial} \leqslant \bigvee \omega$. However, for any finite subset $A'\subseteq \omega^{\partial}$ and
any finite subset $B'\subseteq \omega$ we will have $\bigvee B' < \bigwedge A'$. Hence $(e,A_L)$ is not a compact completion
of $\Lalg$.
\end{proof}

\begin{figure}
\begin{center}
\begin{tikzpicture}[scale=1]

\node (a1) at (-7,2.5) {$a_1$};
\node (a0) at (-6,2.5) {$a_0$};
\node (a2) at (-5,2.5) {$a_2$};
\node (a3) at (-4,2.5) {$a_3$};
\dotli{(-3.5,2.5)--(-2,2.5)}

\node (b0) at (-7,1.5) {$b_0$};
\node (b1) at (-6,1.5) {$b_1$};
\node (b2) at (-5,1.5) {$b_2$};
\node (b3) at (-4,1.5) {$b_3$};
\dotli{(-3.5,1.5)--(-2,1.5)}

\draw [->] (a1) edge (b0);
\draw [->] (a0) edge (b1);
\draw [->] (a2) edge (b1);
\draw [->] (a3) edge (b1);
\draw [->] (a2) edge (b2);
\draw [->] (a3) edge (b2);
\draw [->] (a3) edge (b3);

\po{2,0.45} \po{0.75,3} \po{2,2.75} \po{2,3.25} \po{2,3.75}

\dotli{(2,0.5)--(2,2.45)}
\li{(2,0.45)--(0.75,3)--(2,3.75)--(2,3.25)--(2,2.75)--(2,2.5)}

\node at (0.35,3) {$y$};
\node at (2.4,3.75) {$x_0$};
\node at (2.4,3.25) {$x_1$};
\node at (2.4,2.75) {$x_2$};
\node at (3,1.75) {$\omega^{\partial}$};
\node at (2,0) {$M_L$};

%
%

\end{tikzpicture}
\caption{An RS frame that is not Ti (left) and its dual perfect lattice that is not PTi (right). \label{fig:M_L}}
\end{center}
\end{figure}
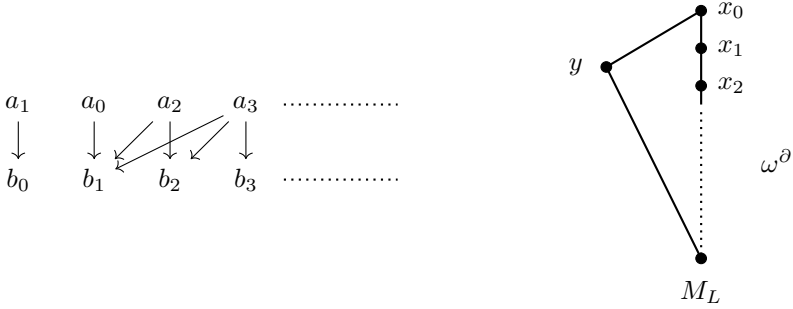

\begin{ex}
We consider the complete lattice $M_L$
depicted on the right in~Fig.~\ref{fig:M_L}. The order is given by the poset $\mathbf{1}\oplus \omega$ with an additional element $y$ incomparable to all elements except the top and the bottom.
It can easily be seen that $M_L$ is a perfect lattice
($J^{\infty}(M_L)=M^{\infty}(M_L)=
\{y\} \cup \{\, x_i \mid i \geqslant 1 \,\}$).  It
is not PTi since there are no $w$ and $z$ for the pair
$x_j \nleqslant y$ ($j \geqslant 1$).

The RS frame corresponding to it was already mentioned in~\cite[page~128]{CGH15} as an example of
an RS frame which is not TiRS (it is indicated on the left in~Fig.~\ref{fig:M_L}): Let $X_1=\{a_i\}_{i \in \omega}$,
$X_2=\{b_i\}_{i \in \omega}$ and let
$$R=\{(a_1,b_0),(a_0,b_1)\}\cup \{\,(a_i,b_j) \mid 2\le i, 1 \le j \le i\,\}.$$
By considering
$\neg(a_0 R b_0)$ it is rather straightforward to show that $(X_1,X_2,R)$ does not satisfy (Ti).
\end{ex}

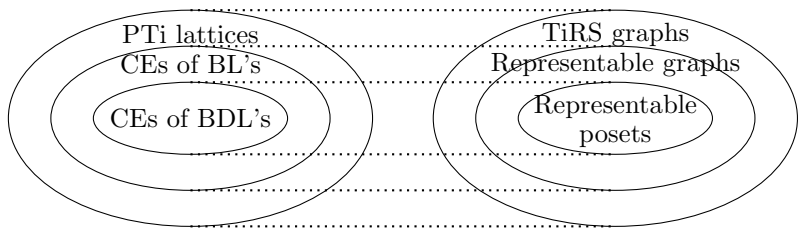
\begin{figure}[ht]
\begin{center}
\begin{tikzpicture}[scale=0.8]
    \draw (0,0) ellipse (1.6cm and 0.6cm);
        \draw (0,0) ellipse (2.3cm and 1.2cm);
        \draw (0,0) ellipse (3cm and 1.8cm);
        \node at (0,0) {CEs of BDL's};
        \node at (0,0.9) {CEs of BL's};
        \node at (0,1.4) {PTi lattices};
        \dotli{(0,0.6)--(7,0.6)}
        \dotli{(0,-0.6)--(7,-0.6)}
        \dotli{(0,1.2)--(7,1.2)}
        \dotli{(0,1.8)--(7,1.8)}
        \dotli{(0,-1.2)--(7,-1.2)}
        \dotli{(0,-1.8)--(7,-1.8)}
        \draw (7,0) ellipse (1.6cm and 0.6cm);
    \draw (7,0) ellipse (2.3cm and 1.2cm);
    \draw (7,0) ellipse (3cm and 1.8cm);
    \node at (7,0.2) {Representable};
    \node at (7,-0.3) {posets};
    \node at (7,0.9) {Representable graphs};
    \node at (7,1.4) {TiRS graphs};
\end{tikzpicture}
\caption{The correspondences between classes of PTi lattices and classes of TiRS graphs. \label{fig:ellipses}}
\end{center}
\end{figure}

Our final picture Fig.~\ref{fig:ellipses} describes the correspondence between PTi lattices and TiRS graphs and between their important subclasses: (i) the canonical
extensions of bounded lattices inside the PTi lattices and representable graphs (as dual graphs of bounded lattices) inside the TiRS graphs; (ii) the canonical
extensions of bounded distributive lattices inside the canonical extensions of bounded lattices and representable posets (as dual graphs of bounded distributive
lattices) inside the representable graphs.

A natural question that we asked already in~\cite[pages~126--127]{CGH15} was which TiRS graphs arise as duals of bounded lattices.
In the case of bounded distributive lattices (denoted as BDL's in Fig.~\ref{fig:ellipses}) this question reduces to the
question of which posets are \emph{representable posets} which seems to be extremely hard. Examples of non-representable posets are also examples of non-representable
graphs as any poset is automatically a TiRS graph. We mention an example of a non-representable poset due to
Tan~\cite{Tan74}
from the 1970s: $T := \omega \oplus \omega^{\delta}$.
The perfect lattice corresponding to this TiRS graph is the PTi lattice ${T_L} := \omega \oplus \mathbf{1} \oplus
\omega^{\delta}$.

\section*{Acknowledgements}

The first author gratefully acknowledges the hospitality
of Matej Bel University during his research visit in September 2017.
%
The third author acknowledges support from Slovak grant VEGA 1/0337/16 and the hospitality of the University of Lisbon during his visit in September 2019. 


\end{document}